\documentclass[12pt,reqno,a4paper]{amsart}
\usepackage{amsmath,amssymb,amsthm,amsaddr}

\usepackage[margin=2.5cm,top=4cm,bottom=3cm,footskip=1cm,headsep=1cm]{geometry}
\usepackage[utf8]{inputenc}
\usepackage{graphicx}
\usepackage{xcolor}
\usepackage[english]{babel}

\def\RR{\mathbb{R}}
\def\NN{\mathbb{N}}
\def\div{\mathop{\rm div}}

\def\eps{\varepsilon}

\author{H. Egger$^*$ \and T. Seitz$^*$ \and C. Tropea$^\dag$}
\address{$^*$Institute for Numerical Analysis and Scientific Computing,\\Department of Mathematics, TU Darmstadt, Germany}
\address{$^\dag$Institute for Fluid Mechanics and Aerodynamics,\\ Center of Smart Interfaces, TU Darmstadt, Germany}
\email{egger@mathematik.tu-darmstadt.de}
\email{seitz@mathematik.tu-darmstadt.de}
\email{c.tropea@sla.tu-darmstadt.de}

\title[Enhancement of flow measurements using fluid-dynamic constraints]{Enhancement of flow measurements \\[1ex]using fluid-dynamic constraints}


\newtheorem{lemma}{Lemma}[section]

\newtheorem{theorem}[lemma]{Theorem}

\theoremstyle{definition}
\newtheorem{remark}[lemma]{Remark}
\newtheorem*{example*}{Example}

\def\u{\mathbf{u}}
\def\v{\mathbf{v}}
\def\w{\mathbf{w}}
\def\z{\mathbf{z}}
\def\f{\mathbf{f}}
\def\g{\mathbf{g}}
\def\h{\mathbf{h}}
\def\n{\mathbf{n}}

\def\p{\mathrm{p}}
\def\q{\mathrm{q}}
\def\l{\mathrm{L}}

\def\I{\mathbf{I}}
\def\H{\mathbf{H}}
\def\L{\mathbf{L}}

\def\ud{{\u^{\delta}}}
\def\ua{{\u_{\alpha}}}
\def\pa{{\p_{\alpha}}}
\def\fa{{\f_{\alpha}}}
\def\ga{{\g_{\alpha}}}
\def\ha{{\h_{\alpha}}}
\def\udag{{\u^{\dag}}}
\def\fdag{{\f^{\dag}}}
\def\gdag{{\g^{\dag}}}
\def\hdag{{\h^{\dag}}}
\def\pdag{\p^\dag}

\def\zero{\mathbf{0}}

\def\div{\nabla \cdot}
\def\Div{\mathrm{div}}

\def\dO{{\partial\Omega}}
\def\dOin{{\dO_{in}}}
\def\dOout{{\dO_{out}}}
\def\dOwall{{\dO_{wall}}}

\def\ttu{\mathsf{u}}
\def\ttp{\mathsf{p}}
\def\ttf{\mathsf{f}}
\def\ttg{\mathsf{g}}
\def\tth{\mathsf{h}}
\def\ttr{\mathsf{r}}
\def\ttA{\mathsf{A}}
\def\ttB{\mathsf{B}}
\def\ttC{\mathsf{C}}

\def\ttE{\mathsf{E}}
\def\ttG{\mathsf{G}}
\def\ttH{\mathsf{H}}
\def\ttK{\mathsf{K}}
\def\ttM{\mathsf{M}}
\def\ttN{\mathsf{N}}
\def\ttK{\mathsf{K}}
\def\ttR{\mathsf{R}}

\begin{document}

\begin{abstract} 
Novel experimental modalities acquire spatially resolved velocity measurements for steady state and transient flows which are of interest for engineering and biological applications. One of the drawbacks of such high resolution velocity data is their susceptibility to measurement errors. 
In this paper, we propose a novel filtering strategy that allows enhancement of noisy measurements to obtain reconstruction of smooth divergence free velocity and corresponding pressure fields, which together approximately comply to a prescribed flow model. The main step in our approach consists of the appropriate use of the velocity measurements in the design of a linearized flow model which can be shown to be well-posed and consistent with the true velocity and pressure fields up to measurement and modeling errors. The reconstruction procedure is formulated as a linear quadratic optimal control problem and the resulting filter has analyzable smoothing and approximation properties. We also discuss briefly the discretization of our approach by finite element methods and comment on the efficient solution of the linear optimality system by iterative solvers. The capability of the proposed method to significantly reduce data noise is demonstrated by numerical tests in which we also compare to other methods like smoothing and solenoidal filtering. 
\end{abstract}

\maketitle

\begin{quote}
\noindent 
{\small {\bf Keywords:} velocity measurements, denoising, optimal control with pdes, fluid dynamics, Navier-Stokes equations, inverse problems, regularization}
\end{quote}

\begin{quote}
\noindent
{\small {\bf AMS-classification:} 49J20, 35R30, 65J20, 76D55}
\end{quote}

\section{Introduction} \label{sec:intro} \setcounter{equation}{0}

Since numerous years the visualization of flow fields has had a significant impact on the systematic understanding and development of fluid-dynamic models as well as on the calibration and verification of computational methods. 
While traditional experimental techniques were able to provide only partial information about the flow field, novel measurement techniques such as particle tracking, tomographic particle imaging, or magnetic resonance velocimetry deliver spatially resolved three-dimensional velocity measurements  \cite{ElkinsMarklPelcEaton03,ElkinsAlley07,herrmann04,pereira00}. 
These new methods therefore allow to image complex flow patterns in a wide range of engineering applications and even in biological in-vivo studies.

Distributed flow measurements provide valuable information about simple and complex flows, but they are typically contaminated by measurement errors which limit their usability in practice to some extent. In order to make the flow measurements more suitable for further analysis, e.g., for model discrimination or for the assessment of derived quantities like pressure drop or wall shear stress, some sort of data post-processing is required \cite{liburdy92}. 

A widely used technique is \emph{data smoothing} which can be accomplished, for instance, by Tikhonov regularization \cite{engl96,tikhonov63} given by
\begin{align} \label{eq:A}
 \min_\u \| \u - \ud\|^2 + \alpha \|\nabla \u\|^2.
\end{align}
Here and below $\ud$ denotes the flow measurements and the minimizer will be the enhanced velocity field. Note that the penalization of the gradient term leads to a smoothed reconstruction and the choice of the regularization parameter $\alpha$ allows a certain trade-off between smoothness and fit to the data.
The underlying quadratic minimization problem can be solved efficiently by Fourier transform or multigrid iterative solvers which makes this filter very efficient in practice.
Note that the above procedure and also various other imaging methods \cite{Scherzer09} successfully reduce high frequency components in the noisy measurements but do not utilize any information about the underlying physics. 

In many applications, the fluid under consideration is incompressible and one might want to incorporate such prior knowledge into the reconstruction process. Requiring the improved velocity field to be divergence free and using a smoothing procedure similar to above, 
we obtain a constrained minimization problem of the form
\begin{subequations}
\begin{align} 
 \min_\u &\| \u - \ud\|^2 + \alpha \|\nabla \u\|^2 \qquad \text{s.t.} \label{eq:B1}\\
         &\div \u = 0.                      \label{eq:B2}
\end{align}
\end{subequations}
This quadratic minimization problem can again be solved efficiently by iterative methods. 
Various computational strategies leading to related divergence free reconstructions have been investigated recently  in the literature under the name \emph{divergence-free} or \emph{solenoidal filtering}; see e.g. \cite{busch13,daSilva13,macedo08,macnally11,OngUecker15,SchiavazziColetti14}. 
%
%
Let us note that, although some noise reduction has been observed even for the case $\alpha=0$, 
the divergence constraint alone does not formally guarantee smoothness of the reconstruction.
This can be seen from the Helmholtz decomposition of vector fields \cite{girault79} and will be illustrated by numerical tests below.

A natural extension of the solenoidal filtering approach, which takes into account only the mass conservation, would be to incorporate also a model for the momentum balance into the reconstruction process.
Since distributed measurement techniques typically acquire time averaged data, it seems reasonable to assume steady flow conditions and to consider, as a first step, the stationary Navier-Stokes equations as the governing physical model. 
The reconstruction could then be defined via
\begin{subequations}
\begin{align}
 &\min_{\f,\u,p} \| \u - \ud\|^2 + \alpha \|\f\|^2 \qquad \text{s.t.} \label{eq:C1}\\ 
                   &-\nu \Delta \u + \u \cdot \nabla \u + \nabla \p = \f, \quad  \div \u = 0. \label{eq:C2}
\end{align}
\end{subequations}
Here and below $\nu>0$ denotes the constant viscosity parameter.
In addition to the differential equations \eqref{eq:C2}, appropriate boundary conditions have to be specified. 
%
The residual $\f$ in the momentum equation serves as a measure for the deviation from the idealized flow model 
due to unmodeled effects like time dependence or non-Newtonian behaviour.
Since a prescribed flow model is satisfied by the reconstructed fields, one may call such an approach a \emph{fluid-dynamically consistent} filter.
Due to the presence of the flow model, the reconstruction will be smooth automatically and no 
additional penalization of the velocity gradients is required.
Moreover, some information about the pressure is obtained. 

The system \eqref{eq:C1}--\eqref{eq:C2} has the form of an optimal control problem governed by the Navier-Stokes equations. 
Such problems have been investigated extensively in the literature; see e.g. \cite{desai94,gunzburger91,herzog10,hou93} for steady and \cite{abergel90,fattorini92,fursikov82,hinze01,gunzburger92} for unsteady flow. 
Note that the nonlinearity in the momentum equation poses severe challenges, both, for the analysis and for the numerical solution. 
It is well-known, for instance, that the Navier-Stokes system admits a unique solution only for sufficiently small data \cite{foias77,temam84}. 
Moreover, due to the nonlinear constraints, the optimization problem \eqref{eq:C1}--\eqref{eq:C2} is non-convex and may have many local minima.
Both aspects make the computational solution demanding or even infeasible. 

In this paper, we therefore propose a strategy that allows us to take advantage of the benefits and at the same time to overcome the drawbacks in the previous approach.
The basic step is to use the distributed velocity measurements in order to replace the nonlinear term in the momentum equation by some linearization; one may think of $\ud \cdot \nabla \u$ as an approximation for $\u \cdot \nabla \u$, although such a simple choice would not yield a well-posed problem in general due to lack of smoothness in the data. 
However, a proper linearization of the convective term will $\u \cdot \nabla \u$ allow us to replace the nonlinear problem \eqref{eq:C1}--\eqref{eq:C2} by a linear quadratic optimal control problem with a unique minimizer that can be computed efficiently.
The use of the distributed measurements in the governing equations is closely related to the \emph{equation error method}, which is well-established in the context of parameter estimation \cite{banks89,hanke99}.

In summary we thus obtain a well-posed and analyzable reconstruction method that produces 
a smooth divergence free velocity field and a corresponding pressure distribution which together 
approximately satisfy the prescribed fluid-dynamic model and at the same time agree well with the measurements.
A proper choice of the regularization parameter $\alpha$ will allow us to find a good balance between data fit and model errors.

\bigskip 

The remainder of the manuscript is organized as follows:
In Section~\ref{sec:model}, we introduce the linearized fluid flow model
underlying our reconstruction approach and we formulate appropriate boundary conditions. 
We then establish the well-posedness of the linearized flow problem and derive error estimates for the linearization procedure.
In Section~\ref{sec:method}, we introduce the linearized optimal control problem 
which is the mathematical formulation of our reconstruction procedure. 
We prove existence and uniqueness of minimizers,
provide some error estimates, and highlight a direct connection to the solenoidal filtering.
Our approach is formulated in infinite dimensions and some discretization strategy is required in order to obtain implementable algorithms.
In Section~\ref{sec:realization}, we therefore outline the systematic discretization by finite element methods 
and we briefly discuss efficient strategies for the numerical solution of the discretized optimal control problem.
The viability of our approach is illustrated in Section~\ref{sec:computation} by some computational tests in which we also compare with the smoothing and the solenoidal filtering approaches outlined above. 
The presentation concludes with a short summary and a discussion of open problems and possible directions for further research.

\section{The linearized flow model} \label{sec:model}

Let us first introduce the linearized flow model that is used as a constraint in the reconstruction process 
and establish its well-posedness. 
We use the fact that the true flow field satisfies a model of similar structure
and derive some perturbation error estimates. 
For illustration, we discuss in some detail the Poiseuille flow between two parallel plates.

\subsection{Geometric setting} \label{sec:geom}
We start with fixing the geometric setting we have in mind. 
Let $\Omega \subset \RR^d$, $d=2,3$ be some bounded Lipschitz domain. We assume that the boundary $\partial\Omega$ 
is piecewise smooth and can be split into three distinct parts $\dOwall$, $\dOin$, and $\dOout$ such that $\overline \dOin \cap \overline\dOout = \emptyset$. 
One may think of a channel where $\dOin$ is the inflow, $\dOout$ the outflow, and $\dOwall$ the wall of the channel. 

\subsection{The linearized flow model} \label{sec:lin}
The key step in our approach is to replace the nonlinear convective term in \eqref{eq:C2} by
an appropriate linearization. For this, we make use of the following identity. 
Let $\u$ and $\w$ be two smooth vector fields and $\div \w=0$. Then 
\begin{align} \label{eq:id}
\w \cdot \nabla \u = \Div (\u \otimes \w) = \tfrac{1}{2}  \w \cdot \nabla \u + \tfrac{1}{2} \Div ( \u \otimes \w), 
\end{align}
where $(\w \cdot \nabla \u)_i = \sum\nolimits_j \w_j \partial_j \u_i$ and $\Div (\u \otimes \w)_i = \sum\nolimits_j \partial_j (\u_i \w_j)$ by definition. 
Throughout we use bold symbols to denote vector valued functions and spaces of such functions. 

For an incompressible fluid, we can then express the convective term $\u \cdot \nabla \u$ equivalently by $\frac{1}{2} \u \cdot \nabla \u + \frac{1}{2} \Div(\u \otimes \u)$. Such a form of the convective term is sometimes employed in the design and analysis of numerical methods for incompressible flow. 
Using the velocity measurements to replace one of the functions in either of the quadratic terms,
we obtain $\frac{1}{2} \ud \cdot \nabla \u + \frac{1}{2} \Div(\u \otimes \ud)$ as an approximation. The latter expression can then be used to replace the nonlinear convective term $\u \cdot \nabla \u$ in the momentum equation \eqref{eq:C2}, which leads to the following linearized flow model
\begin{subequations}
\begin{align}
-\nu \Delta \u + \tfrac{1}{2} \u^\delta \cdot \nabla \u 
    + \tfrac{1}{2} \Div( \u \otimes \u^\delta) + \nabla \p &= \f \qquad \text{in } \Omega, \label{eq:lin1}\\
                                                  \div \u &= 0  \qquad \text{in } \Omega. \label{eq:lin2}
\end{align}
Apart from the special form of the convective term this amounts to an Oseen problem 
with convective velocity $\ud$ that will neither be smooth nor divergence free in general.
To complete the description of the model, we impose the following boundary conditions
\begin{align}
                                                                  \u &= \g    \qquad \text{on } \dOin, \label{eq:lin3}\\
                                                                  \u &= \zero \qquad \text{on } \dOwall, \label{eq:lin4}\\
(-\nu \nabla \u + \tfrac{1}{2} \u \otimes \u^\delta + \p \I) \cdot \n &= \h    \qquad \text{on } \dOout. \label{eq:lin5}
\end{align}
\end{subequations}
We thus prescribe the full velocity field at the inflow boundary
and use a Neumann-type boundary condition at the outflow. 
A no-slip condition is used at the walls of the channel. 
Also other types of the boundary conditions could be incorporated with minor changes.
%
%
The functions $\f$, $\g$, and $\h$ arising as data in the flow model will later enter the reconstruction process 
as additional parameters which are to be determined.

\subsection{Well-posedness of the linearized flow model} \label{sec:selposedlin}

Since the data $\ud$ stem from measurements, one can in general not require their spatial smoothness.
It is therefore not clear a-priori, if the model \eqref{eq:lin1}--\eqref{eq:lin5} is meaningful
from a mathematical point of view. 
As a first step, we thus want to clarify the well-posedness of the linearized flow model. 
\begin{theorem} \label{thm:1}
Let $\ud \in \L^3(\Omega)$.
Then for 
$\f \in \L^2(\Omega)$, $\g \in \H_0^1(\dOin)$, and $\h \in \L^2(\dOout)$,
the problem \eqref{eq:lin1}--\eqref{eq:lin5} has a unique weak solution $\u \in \H^1(\Omega)$ and $\p \in \l^2(\Omega)$.
Moreover 
\begin{align*} 
\|\u\|_{\H^1(\Omega)} + \|\p\|_{\l^2(\Omega)} 
\le C \big( \|\f\|_{\L^2(\Omega)} + \|\g\|_{\H^1(\dOin)} + \|\h\|_{\L^2(\dOout)}\big)
\end{align*}
with $C$ depending only on $\|\ud\|_{\L^3(\Omega)}$, on the parameter $\nu$, and on the geometry.
\end{theorem}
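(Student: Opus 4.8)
The plan is to recast \eqref{eq:lin1}--\eqref{eq:lin5} as a linear saddle-point problem and to apply the Babuska--Brezzi theory for mixed variational problems. First I would homogenize the Dirichlet data: since $\g \in \H_0^1(\dOin)$ is compatible with the no-slip condition at the junction $\overline{\dOin} \cap \overline{\dOwall}$, there exists a lifting $\g_e \in \H^1(\Omega)$ with $\g_e = \g$ on $\dOin$, $\g_e = \zero$ on $\dOwall$, and $\|\g_e\|_{\H^1(\Omega)} \le C\|\g\|_{\H^1(\dOin)}$. Writing $\u = \u_0 + \g_e$ reduces everything to a problem with homogeneous Dirichlet conditions for $\u_0$ on $\dOin \cup \dOwall$, at the price of an inhomogeneous divergence constraint $\div \u_0 = -\div \g_e$ and modified right-hand sides, all of which are controlled by the data.

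Next I would derive the weak formulation over the velocity space $\mathbf{V} = \{\v \in \H^1(\Omega) : \v = \zero \text{ on } \dOin \cup \dOwall\}$ and the pressure space $Q = \l^2(\Omega)$. Testing the momentum equation with $\v \in \mathbf{V}$ and integrating by parts the diffusion term, the pressure gradient, and the divergence-form part $\tfrac{1}{2}\Div(\u \otimes \ud)$ of the convective term produces the boundary integral $\int_{\dO} \big[(-\nu\nabla\u + \tfrac{1}{2}\u\otimes\ud + \p\I)\cdot\n\big]\cdot\v$. The crucial point is that on $\dOin \cup \dOwall$ this vanishes because $\v \in \mathbf{V}$, while on $\dOout$ the bracket is exactly the combination prescribed in \eqref{eq:lin5}, so the integral reduces to $\int_{\dOout}\h\cdot\v$. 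The special symmetric form of the convective term together with the tailored outflow condition is thus engineered precisely so that no uncontrolled boundary contribution survives. One arrives at the bilinear forms $a(\u,\v) = \nu\int_\Omega\nabla\u:\nabla\v + \tfrac{1}{2}\int_\Omega(\ud\cdot\nabla\u)\cdot\v - \tfrac{1}{2}\int_\Omega(\ud\cdot\nabla\v)\cdot\u$ and $b(\v,\p) = -\int_\Omega\p\,\div\v$.

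The decisive observation for coercivity is that the convective contribution to $a$ is \emph{antisymmetric}, whence $a(\u,\u) = \nu\|\nabla\u\|_{\L^2(\Omega)}^2$ for every $\u \in \mathbf{V}$, and this holds \emph{independently} of whether $\div\ud = 0$ — which is the whole point of the symmetrized linearization. Together with the Poincar\'e inequality on $\mathbf{V}$ (valid since $\dOin \cup \dOwall$ has positive surface measure), this gives coercivity of $a$ on the entire space with constant proportional to $\nu$. Continuity of the convective term is where the hypothesis $\ud \in \L^3(\Omega)$ is used: by the Sobolev embedding $\H^1(\Omega)\hookrightarrow\L^6(\Omega)$ (for $d\le 3$) and H\"older's inequality with exponents $3,2,6$ one bounds $|\int_\Omega(\ud\cdot\nabla\u)\cdot\v| \le \|\ud\|_{\L^3(\Omega)}\|\nabla\u\|_{\L^2(\Omega)}\|\v\|_{\L^6(\Omega)}$, so $a$ is bounded with constant depending on $\|\ud\|_{\L^3(\Omega)}$ and $\nu$.

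It remains to verify the inf-sup condition for $b$, i.e. that $\div$ maps $\mathbf{V}$ onto $Q = \l^2(\Omega)$ with a bounded right inverse. Because the boundary contains the free part $\dOout$, there is no zero-mean constraint on the pressure and the full space $\l^2(\Omega)$ is attained; this is the classical surjectivity of the divergence on Lipschitz domains with a nonempty Neumann boundary, for which the inf-sup constant depends only on the geometry. With continuity, coercivity on $\mathbf{V}$, and the inf-sup condition established, the Babuska--Brezzi theorem yields a unique pair $(\u_0,\p)$ together with the a priori bound; undoing the lifting and collecting the estimates for $\g_e$ then gives the stated inequality with $C = C(\|\ud\|_{\L^3(\Omega)}, \nu, \Omega)$. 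The main obstacle is the careful bookkeeping of the boundary terms in the integration by parts — making sure that the antisymmetry of the convective form survives for a non-solenoidal $\ud$ and that the modified outflow condition absorbs exactly the remaining boundary integral — since this structural cancellation is precisely what renders the linearized model well-posed despite the low regularity of the measured data.
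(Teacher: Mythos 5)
Your proposal is correct and follows essentially the same route as the paper: the identical weak formulation with the antisymmetric convective form $c(\ud;\u,\v)=\tfrac{1}{2}(\ud\cdot\nabla\u,\v)_\Omega-\tfrac{1}{2}(\u,\ud\cdot\nabla\v)_\Omega$, coercivity from $c(\ud;\u,\u)=0$, boundedness via $\ud\in\L^3(\Omega)$ with the $\H^1\hookrightarrow\L^6$ embedding, and the standard Oseen/saddle-point machinery. The paper merely compresses what you spell out (it reduces to $\g=\zero$ "without loss of generality" where you construct the lifting, and cites Temam for the inf-sup and Babuska--Brezzi steps you verify explicitly), so your write-up is a faithful, more detailed version of the paper's argument.
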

\begin{proof}
The result follows with standard arguments for the analysis of the stationary flow equations. Since the momentum equation is a bit non-standard, we sketch the main steps of the proof.
The weak solution of problem \eqref{eq:lin1}--\eqref{eq:lin5} is characterized by the following 
mixed variational problem: Find $\u \in \H^1(\Omega)$ and 
$\p \in \l^2(\Omega)$ with $\u = \g$ on $\dOin$ and $\u=\zero$ on $\dOwall$ such that 
\begin{align} 
a(\u,\v) + c(\ud;\u,\v) + b(\v,\p) &= (\f,\v)_\Omega + (\h,\v)_{\dOout} \label{eq:var1}\\
b(\u,\q) &= 0 \label{eq:var2}
\end{align}
for all $\v \in \H^1(\Omega)$ and  $\q \in L^2(\Omega)$ with $\v=\zero \text{ on } \dOwall \cup \dOin$. 
Here $a(\u,\v) = \nu (\nabla \u, \nabla \v)_\Omega$ is the bilinear form for the viscous term, 
$c(\ud;\u,\v) = \tfrac{1}{2} (\ud \cdot \nabla \u,\v)_\Omega - \tfrac{1}{2} (\u, \ud \cdot \nabla \v)_\Omega$ 
represents the convective term, and $b(\u,\q) = -(\div \u,\q)_\Omega$ the weak form for the divergence operator. 
Without loss of generality, we may assume that $\g=\zero$ in the sequel.
%
Due to the special form of the convective terms, the form $c(\ud;\cdot,\cdot)$ is anti-symmetric, which implies $c(\ud;\u,\u)=0$.
The assumption $\ud \in \L^3(\Omega)$ further implies that $c(\ud;\u,\v)$ is bounded for $\u,\v \in \H^1(\Omega)$.
Standard arguments used for the analysis of the Oseen problem then yield the assertions; see \cite[Ch.~II]{temam84} for details.
\end{proof}
As can be seen from the proof, the special form of the convective term and of the outflow boundary condition were essential here to obtain the well-posedness and the energy estimate under minimal regularity assumptions on the measured flow field.

\subsection{Estimates for the linearization error} \label{sec:linerr}

The linearization procedure introduces some perturbations which we would like to quantify next.
To be able to do so, we set up a flow model of similar structure which describes the true flow. 
Let $\udag$ and $\pdag$ denote the true velocity and pressure fields which are assumed to be sufficiently smooth. Then 
\begin{subequations}
\begin{align} 
-\nu \Delta \udag + \tfrac{1}{2} \udag \cdot \nabla \udag  + \tfrac{1}{2} \Div(\udag \otimes \udag) + \nabla \pdag &= \fdag, \label{eq:ex1}\\
\div \udag &= 0, \label{eq:ex2}
\end{align}
for some appropriate function $\fdag$ which is just defined as the left hand side of the first equation. 
In a similar way, we can define functions $\gdag$ and $\hdag$ such that
\begin{align}
                                                                 \udag &= \gdag    \qquad \text{on } \dOin, \label{eq:ex3}\\
                                                                 \udag &= \zero \  \qquad \text{on } \dOwall, \label{eq:ex4}\\
(-\nu \nabla \udag + \tfrac{1}{2} \udag \otimes \udag + \pdag \I) \cdot \n &= \hdag    \qquad \text{on } \dOout. \label{eq:ex5}
\end{align}
\end{subequations}
This system has the same form as \eqref{eq:lin1}--\eqref{eq:lin5} but with data $\f$, $\g$, $\h$
and convective velocity field $\ud$ replaced appropriately. 
This allows to estimate the difference between the solutions of \eqref{eq:ex1}--\eqref{eq:ex5} and the linearized model \eqref{eq:lin1}--\eqref{eq:lin5}.
\begin{theorem} \label{thm:2}
Let $\ud \in \L^3(\Omega)$ and let $(\u,\p)$ and $(\udag,\pdag)$ be defined as above. 
Then  
\begin{align*}
&\|\u - \udag\|_{\H^1(\Omega)} + \|\p - \pdag\|_{\l^2(\Omega)} \\
& \qquad \qquad \le C \big( \|\f-\fdag\|_{\L^2(\Omega)} + \|\g-\gdag\|_{\H^1(\dOin)} + \|\h-\hdag\|_{\L^2(\dOout)} + \|\udag-\ud\|_{\L^3(\Omega)}\big) 
\end{align*}
with $C$ depending only on the bounds for the data, the parameter $\nu$, and the geometry.
\end{theorem}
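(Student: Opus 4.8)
The plan is to reduce the error estimate to the stability bound of Theorem~\ref{thm:1} by deriving, for the difference of the two solutions, a variational problem that has exactly the structure of the linearized model. First I would write the exact system \eqref{eq:ex1}--\eqref{eq:ex5} in weak form. Since it has the same structure as \eqref{eq:lin1}--\eqref{eq:lin5}, only with convective velocity $\udag$ in place of $\ud$, its weak form reads
\[
a(\udag,\v) + c(\udag;\udag,\v) + b(\v,\pdag) = (\fdag,\v)_\Omega + (\hdag,\v)_{\dOout}, \qquad b(\udag,\q) = 0,
\]
with the forms $a$, $b$, $c$ from the proof of Theorem~\ref{thm:1}. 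Subtracting this from \eqref{eq:var1}--\eqref{eq:var2} produces an identity for the errors $\mathbf{e} := \u - \udag$ and $r := \p - \pdag$ in which every term is standard except the difference of convective contributions $c(\ud;\u,\v) - c(\udag;\udag,\v)$.

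The crucial step is to split this difference using the trilinearity of $c$:
\[
c(\ud;\u,\v) - c(\udag;\udag,\v) = c(\ud;\mathbf{e},\v) + c(\ud-\udag;\udag,\v),
\]
where the first part exploits linearity in the second argument and the second part linearity in the convective slot. Moving $c(\ud;\mathbf{e},\v)$ to the left, I find that $(\mathbf{e},r)$ solves
\[
a(\mathbf{e},\v) + c(\ud;\mathbf{e},\v) + b(\v,r) = (\f-\fdag,\v)_\Omega + (\h-\hdag,\v)_{\dOout} - c(\ud-\udag;\udag,\v),
\]
together with $b(\mathbf{e},\q)=0$ and the boundary data $\mathbf{e}=\g-\gdag$ on $\dOin$, $\mathbf{e}=\zero$ on $\dOwall$. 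This is precisely a linearized flow problem with convective velocity $\ud$, data $\f-\fdag$, $\g-\gdag$, $\h-\hdag$, and a single additional load functional $\v \mapsto -c(\ud-\udag;\udag,\v)$.

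It then remains to control this extra functional and to apply Theorem~\ref{thm:1}. Bounding the two integrals defining $c(\ud-\udag;\udag,\v)$ by H\"older's inequality with exponents $(3,2,6)$ and the Sobolev embedding $\H^1(\Omega)\hookrightarrow\L^6(\Omega)$ (valid for $d=2,3$) gives
\[
|c(\ud-\udag;\udag,\v)| \le C\,\|\ud-\udag\|_{\L^3(\Omega)}\,\|\udag\|_{\H^1(\Omega)}\,\|\v\|_{\H^1(\Omega)}.
\]
Since $\udag$ itself solves a problem of the same type with convective velocity $\udag\in\L^3(\Omega)$, Theorem~\ref{thm:1} bounds $\|\udag\|_{\H^1(\Omega)}$ by the exact data, so this factor is absorbed into the constant. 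Applying the stability estimate of Theorem~\ref{thm:1} to the error problem, whose constant depends only on $\|\ud\|_{\L^3(\Omega)}$, on $\nu$, and on the geometry, then yields the asserted bound.

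The main obstacle is conceptual rather than technical: recognizing that the mismatch between the two convective velocities $\ud$ and $\udag$ contributes, after the trilinear splitting, the single additional load $c(\ud-\udag;\udag,\cdot)$, and that this term is controllable precisely because $\udag$ is smooth while only the weak norm $\|\ud-\udag\|_{\L^3(\Omega)}$ of the data perturbation enters. Once this structure is identified, the remaining estimates are the same H\"older and Sobolev bounds already underlying Theorem~\ref{thm:1}.
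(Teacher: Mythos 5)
Your proof is correct, and it takes a recognizably different (one-step) route from the paper's (two-step) argument. The paper introduces the intermediate solution $(\widetilde\u,\widetilde\p)$ of the linearized problem with convective velocity $\udag$ and the \emph{same} data $(\f,\g,\h)$, splits the error as $\u-\udag=\w+\z$ with $\w=\u-\widetilde\u$ and $\z=\widetilde\u-\udag$, and estimates the pieces separately: $\w$ carries only the convective mismatch and is bounded by testing with $\v=\w$, $\q=-\pi$, while $\z$ carries only the data differences and is bounded by invoking Theorem~\ref{thm:1} verbatim. You instead subtract the two weak formulations directly and use the trilinear splitting $c(\ud;\u,\v)-c(\udag;\udag,\v)=c(\ud;\mathbf{e},\v)+c(\ud-\udag;\udag,\v)$, which is algebraically correct and yields a single error problem with convective velocity $\ud$, data $(\f-\fdag,\g-\gdag,\h-\hdag)$, and one extra load $-c(\ud-\udag;\udag,\cdot)$; your H\"older/Sobolev bound with exponents $(3,2,6)$ for that load is exactly the estimate the paper uses for $\w$, with $\udag$ as the fixed field in place of $\widetilde\u$ (both are uniformly bounded, so this is immaterial). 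The one step you gloss over: Theorem~\ref{thm:1} as stated admits only right-hand sides of the form $(\f,\v)_\Omega+(\h,\v)_{\dOout}$, so ``applying'' it to your error problem requires the extension of its stability estimate to an arbitrary bounded linear functional on the test space. This extension is immediate from the Lax--Milgram/inf-sup structure of its proof (the antisymmetry $c(\ud;\v,\v)=0$ and the lifting of the Dirichlet data $\g-\gdag$ are unchanged), but strictly speaking it is a strengthened statement; alternatively one can redo the energy and inf-sup estimates by hand for the error problem, as the paper effectively does for $\w$. In exchange for this small caveat, your version saves the bookkeeping of an intermediate solution, whereas the paper's decomposition has the merit that each piece uses either the literal statement of Theorem~\ref{thm:1} or a bare coercivity argument.
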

\begin{proof}
Let $(\widetilde \u,\widetilde \p)$ denote the solution of problem \eqref{eq:lin1}--\eqref{eq:lin5} with $\ud$ replaced by $\udag$. 
The error can then be decomposed into 
$$
\u - \udag = \w + \z \qquad \text{and} \qquad \p - \pdag = \pi + \psi,
$$
with functions $\w=\u-\widetilde\u$, $\pi=\p-\widetilde \p$, $\z=\widetilde \u - \udag$, and $\psi=\widetilde \p - \pdag$ that can be estimated separately. 
From the definition of $\w$ and $\pi$, we observe that $\w=\zero$ on $\dOin \cup \dOwall$ and
\begin{align*}
a(\w,\v) + c(\ud;\w,\v) + b(\v,\pi) &= c(\udag-\ud;\widetilde\u,\v) \\
b(\w,\q) &= 0
\end{align*}
for all $\v \in \H^1(\Omega)$ and $\q \in L^2(\Omega)$ with $\v=0 \text{ on } \dOwall \cup \dOin$.
Choosing  $\v=\w$ and $\q=-\pi$ as test functions and applying the Poincar\'e-Friedrichs inequality yields
\begin{align*}
c \|\w\|^2_{\H^1(\Omega)} 
\le a(\w,\w) 
&= a(\w,\w) + c(\ud;\w,\w) + b(\w,\pi) \\
&= c(\udag-\ud;\widetilde\u,\w) \le C \|\udag-\ud\|_{\L^3(\Omega)} \|\widetilde \u\|_{\H^1(\Omega)} \|\w\|_{\H^1(\Omega)}.
\end{align*}
Since the term $\|\widetilde \u\|_{\H^1(\Omega)}$ can be bounded uniformly by Theorem~\ref{thm:1}, we further obtain 
$\|\w\|_{\H^1(\Omega)} \le C' \|\udag-\ud\|_{\L^3(\Omega)}$
and $\|\pi\|_{L^2(\Omega)}$ can be bounded by $\|\w\|_{\H^1(\Omega)}$ with the usual arguments. 
Next observe that $\z=\g-\gdag$ on $\dOin$, $\z=\zero$ on $\dOwall$, and 
\begin{align*} 
a(\z,\v) + c(\udag;\z,\v) + b(\v,\psi) &= (\f-\fdag,\v)_\Omega + (\h-\hdag,\v)_{\dOout} \\
b(\z,\q) &= 0
\end{align*}
for all $\v \in \H^1(\Omega)$ and $\q \in L^2(\Omega)$ with $\v=0 \text{ on } \dOwall \cup \dOin$. 
By Theorem~\ref{thm:1} we thus obtain $\|\z\|_{\H^1(\Omega)} + \|\psi\|_{L^2(\Omega)} 
\le C'' \big( \|\f-\fdag\|_{\L^2(\Omega)} + \|\g-\gdag\|_{\H^1(\dOin)} + \|\h-\hdag\|_{\L^2(\dOout)}\big)$. 
The assertion of Theorem~\ref{thm:2} then follows by a combination of these  estimates.
\end{proof}

\begin{remark}
If the flow model is a reasonable approximation for the physical conditions, we may assume that $\f^\dag$, $\g^\dag$, and $\h^\dag$ are known to first order. For illustration, let us discuss a particular example which will also serve as our test problem later on.
\end{remark}

\subsection{Poisseuille flow} \label{sec:poisseuille}

In simple geometries the solution of the stationary Navier-Stokes equations can be computed analytically. The laminar flow between two parallel plates with distance $d$ along a path of length $L$, for instance, is characterized by
\begin{align}  \label{eq:poisseuille}
\p^\dag(x,y)= \p_0 + \frac{\p_L-\p_0}{L} x
\qquad \text{and} \qquad 
\u^\dag(x,y) = \left(\frac{\p_L-\p_0}{2\nu L} (dy - y^2), 0\right),
\end{align}
where $\p_0$, $\p_L$ denote the pressures at position $x=0$ and $x=L$, respectively. 
Similar formulas are available for channels with other geometries \cite{batchelor67}.
The solution $(\udag,\pdag)$ given by the Poisseuille law \eqref{eq:poisseuille} satisfies the system \eqref{eq:ex1}--\eqref{eq:ex5} with $\fdag=\zero$ and functions $\gdag$ and $\hdag$ that can
be computed from \eqref{eq:poisseuille}.
%
%
The above estimate for the perturbation introduced by the linearization procedure reads
\begin{align*}
&\|\u - \udag\|_{\H^1(\Omega)} + \|\p - \pdag\|_{\L^2(\Omega)} \\
& \qquad \le C \big( \|\f-\fdag\|_{\L^2(\Omega)} + \|\g-\gdag\|_{\H^1(\dOin)} + \|\h-\hdag\|_{\L^2(\dOout)} + \|\udag-\ud\|_{\L^3(\Omega)}\big).
\end{align*}
The total error in the solution thus results from misspecifications $\f-\fdag$, $\g-\gdag$, $\h-\hdag$ 
of the physical model on the one hand, and from perturbations $\ud - \udag$ in the measurements on the other. 
This observation will be the guideline for the formulation of our reconstruction method in the next section.

\section{The reconstruction method} \label{sec:method} \setcounter{equation}{0}

For the enhancement of the velocity measurements $\ud$, 
we now consider the following linearization of the optimal control approach \eqref{eq:C1}--\eqref{eq:C2} 
outlined in the introduction.
\begin{subequations}
\begin{align}
 &\min_{\f,\g,\h,\u,\p} \|\u - \ud\|^2_{\L^2(\Omega)} 
+ \alpha \big( \|\f-\f^*\|_{\L^2(\Omega)}^2 + \|\g-\g^*\|_{\H^1(\dOin)}^2 + \|\h-\h^*\|_{\L^2(\dOout)}^2\big) \label{eq:min1}\\
&\qquad \qquad \text{s.t. } \eqref{eq:lin1}-\eqref{eq:lin5}. \label{eq:min2}
\end{align} 
\end{subequations}
\noindent
The choice of function spaces over which is minimized is clear from the theorems of the previous section and the norms in the regularization terms. 
The functions $\f^*$, $\g^*$, and $\h^*$ serve as approximations for the unknown correct data 
$\fdag$, $\gdag$, and $\hdag$ in the governing fluid-dynamic model \eqref{eq:lin1}--\eqref{eq:lin5} and enter as additional \emph{model parameters}.
The reconstructed field thus minimizes a weighted sum of deviations from the velocity data and the prescribed flow model and the choice of the regularization parameters $\alpha$ allows us to balance the two error contributions.

\subsection{Existence of a unique minimizer} \label{sec:wellposedrec}

Due to the well-posedness of the linearized flow problem \eqref{eq:lin1}--\eqref{eq:lin5},
we can express $\u=\u(\f,\g,\h)$ and $\p=\p(\f,\g,\h)$ in terms of the functions $\f$, $\g$, and $\h$. 
This allows us to eliminate the fields $\u$ and $\p$ from \eqref{eq:min1}--\eqref{eq:min2} 
and to obtain the following equivalent minimization problem
\begin{align}
\min_{\f,\g,\h} J_\alpha(\f,\g,\h)
\label{eq:red}
\end{align}
with reduced cost functional $J_\alpha$ depending only on the data $\f$, $\g$ and $\h$, which is defined by 
$J_\alpha(\f,\g,\h) = \|\u(\f,\g,\h)-\ud\|^2_{\L^2(\Omega)} + \alpha \big( \|\f-\f^*\|_{\L^2(\Omega)}^2 + \|\g-\g^*\|_{\H^1(\dOin)}^2 + \|\h-\h^*\|_{\L^2(\dOout)}^2\big).$ 
The existence of a unique minimizer for \eqref{eq:red} follows with standard arguments in convex analysis. 
By the equivalence with the problem \eqref{eq:min1}--\eqref{eq:min2}, we also obtain the well-posedness of the original formulation.
\begin{theorem} \label{thm:3}
Let $\ud \in \L^3(\Omega)$ and let $\f^* \in \L^2(\Omega)$, $\g^* \in \H_0^1(\dOin)$ and $\h^* \in \L^2(\dOout)$. 
Then for any $\alpha > 0$, the reduced problem \eqref{eq:red} has a unique solution with components $\fa \in\L^2(\Omega)$, $\ga \in \H_0^1(\dOin)$, and $\ha \in \L^2(\dOout)$.
Together with $\ua=\u(\fa,\ga,\ha)$ and $\pa=\p(\fa,\ga,\ha)$ this yields the unique solution of problem \eqref{eq:min1}--\eqref{eq:min2}.
\end{theorem}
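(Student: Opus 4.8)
The plan is to recast \eqref{eq:red} as the minimization of a Tikhonov-type quadratic functional over a Hilbert space and to invoke the standard existence and uniqueness theory for such problems. First I would collect the controls into the product Hilbert space $X = \L^2(\Omega) \times \H_0^1(\dOin) \times \L^2(\dOout)$, equipped with the natural norm $\|(\f,\g,\h)\|_X^2 = \|\f\|_{\L^2(\Omega)}^2 + \|\g\|_{\H^1(\dOin)}^2 + \|\h\|_{\L^2(\dOout)}^2$, which is precisely the norm appearing in the regularization term. Since the state system \eqref{eq:lin1}--\eqref{eq:lin5} is linear in the data $(\f,\g,\h)$ and all boundary conditions depend linearly on $\g$ and $\h$, Theorem~\ref{thm:1} guarantees that the control-to-state map $S \colon X \to \L^2(\Omega)$, $(\f,\g,\h) \mapsto \u(\f,\g,\h)$, is well-defined, linear, and bounded; its operator norm is controlled by the constant $C$ of Theorem~\ref{thm:1} together with the continuous embedding $\H^1(\Omega) \hookrightarrow \L^2(\Omega)$.

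Writing $w=(\f,\g,\h)$ and $w^*=(\f^*,\g^*,\h^*)$, the reduced functional then takes the form
\begin{align*}
J_\alpha(w) = \|S w - \ud\|_{\L^2(\Omega)}^2 + \alpha \, \|w - w^*\|_X^2 .
\end{align*}
Next I would verify the three properties that drive the direct method. The map $w \mapsto \|Sw - \ud\|^2$ is convex and continuous, being the square of the norm of an affine continuous function, while the regularization term $\alpha\|w-w^*\|_X^2$ is strictly convex for every $\alpha>0$; hence $J_\alpha$ is strictly convex and continuous on $X$. Coercivity is immediate from the bound $J_\alpha(w) \ge \alpha \|w-w^*\|_X^2$, so $J_\alpha(w) \to \infty$ as $\|w\|_X \to \infty$. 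Being convex and continuous on the Hilbert space $X$, the functional $J_\alpha$ is moreover weakly lower semicontinuous.

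With these ingredients, the existence of a minimizer follows from the direct method: a minimizing sequence is bounded by coercivity, admits a weakly convergent subsequence by reflexivity of $X$, and the weak limit minimizes $J_\alpha$ by weak lower semicontinuity; uniqueness of the minimizer $(\fa,\ga,\ha)$ then follows from strict convexity. Equivalently, and this is the route I would favor for a clean one-step argument, the first-order optimality condition $J_\alpha'(w)=0$ is the linear variational equation $(S^*S + \alpha I)\,w = S^*\ud + \alpha\, w^*$, whose left-hand side is a bounded, symmetric, and, since $\alpha>0$, coercive bilinear form on $X$, so that the Lax--Milgram lemma yields existence and uniqueness directly. Finally, setting $\ua = \u(\fa,\ga,\ha)$ and $\pa = \p(\fa,\ga,\ha)$ recovers the unique solution of the full problem \eqref{eq:min1}--\eqref{eq:min2} via the equivalence already noted before the statement. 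I do not anticipate a genuine obstacle here: the only point requiring care is the linearity and boundedness of $S$, which is exactly what Theorem~\ref{thm:1} supplies, after which everything reduces to the textbook linear-quadratic theory.
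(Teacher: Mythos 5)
Your proposal is correct and follows essentially the same route as the paper, which likewise observes that the control-to-state map is affine linear and continuous, so that $J_\alpha$ is quadratic, strictly convex, lower semicontinuous, and coercive, whence a unique minimizer exists by standard convex analysis. Your write-up merely makes the standard argument explicit (direct method on the product Hilbert space, plus the equivalent Lax--Milgram formulation of the normal equations), which the paper leaves implicit.
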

\begin{proof}
The mapping $(\f,\g,\h) \mapsto (\u(\f,\g,\h),\p(\f,\g,\h))$ is affine linear and continuous. 
As a consequence, the functional $J_\alpha$ is quadratic, bounded from below, strictly convex, 
lower semi-continuous, and coercive. This implies existence of a unique minimizer. 
\end{proof}
As we will see below, both formulations \eqref{eq:min1}--\eqref{eq:min2} as well as \eqref{eq:red} 
are well suited as a starting point for the design of efficient numerical solution procedures. 

\subsection{Estimates for the reconstruction error} \label{sec:recerr}

As a theoretical justification for the proposed method let us next present some quantitative estimates for the reconstruction error
which illustrate what kind of numerical results can be expected and which allow us to draw some conclusions about the proper choice 
of the regularization parameter. 
\begin{theorem} \label{thm:4} 
Let $(\ua,\pa)$ denote the velocity and pressure components of the unique solution of problem \eqref{eq:min1}--\eqref{eq:min2} and  assume that $\|\udag - \ud\|_{\L^3(\Omega)} \le \delta$. 
Then the following estimates hold true:
\begin{itemize}
\item[(i)] $\|\ua - \udag\|_{\L^2(\Omega)}^2 \le C \delta^2 + \alpha \big( \|\f^\dag-\f^*\|^2_{\L^2(\Omega)} + \|\g^\dag - \g^*\|^2_{\H^1(\dOin)} + \|\h^\dag-\h^*\|_{\L^2(\dOout)}^2\big)$.
\smallskip
\item[(ii)] $\|\ua - \udag\|^2_{\H^1(\Omega)} \le C \big( \delta^2 + \delta^2/\alpha + \|\fdag-\f^*\|^2_{\L^2(\Omega)} + \|\g^\dag-\g^*\|^2_{\L^2(\dOin)} + \|\h^\dag-\h^*\|_{\L^2(\dOout)}^2\big)$.
\end{itemize}
The second bound also holds for the error $\|\pa-\pdag\|_{\l^2(\Omega)}^2$ in the pressure.
\end{theorem}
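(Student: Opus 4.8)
The plan is to play off the minimizing property of $(\fa,\ga,\ha)$ against the linearization error estimate of Theorem~\ref{thm:2}, which is the only nontrivial analytic input. Abbreviate the model mismatch by $M^2 = \|\fdag-\f^*\|_{\L^2(\Omega)}^2 + \|\gdag-\g^*\|_{\H^1(\dOin)}^2 + \|\hdag-\h^*\|_{\L^2(\dOout)}^2$. First I would bound the reduced cost at the admissible triple $(\fdag,\gdag,\hdag)$: writing $\widehat\u=\u(\fdag,\gdag,\hdag)$ for the solution of \eqref{eq:lin1}--\eqref{eq:lin5} driven by this data and the measured convective velocity $\ud$, Theorem~\ref{thm:2} with $\f=\fdag$, $\g=\gdag$, $\h=\hdag$ makes all data differences vanish and leaves $\|\widehat\u-\udag\|_{\H^1(\Omega)} \le C\|\udag-\ud\|_{\L^3(\Omega)} \le C\delta$. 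With the embeddings $\H^1(\Omega)\hookrightarrow\L^2(\Omega)$ and $\L^3(\Omega)\hookrightarrow\L^2(\Omega)$ valid on the bounded domain $\Omega$, the triangle inequality gives $\|\widehat\u-\ud\|_{\L^2(\Omega)}\le C\delta$, and hence $J_\alpha(\fdag,\gdag,\hdag)\le C\delta^2+\alpha M^2$.

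Optimality then yields $J_\alpha(\fa,\ga,\ha)\le J_\alpha(\fdag,\gdag,\hdag)$, and reading off the two nonnegative summands of $J_\alpha$ separately produces the working estimates
\begin{align*}
\|\ua-\ud\|_{\L^2(\Omega)}^2 &\le C\delta^2+\alpha M^2, \\
\|\fa-\f^*\|_{\L^2(\Omega)}^2+\|\ga-\g^*\|_{\H^1(\dOin)}^2+\|\ha-\h^*\|_{\L^2(\dOout)}^2 &\le C\delta^2/\alpha+M^2.
\end{align*}
Estimate (i) follows at once by splitting $\ua-\udag=(\ua-\ud)+(\ud-\udag)$ and combining the first line with $\|\ud-\udag\|_{\L^2(\Omega)}\le C\delta$.

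The genuinely harder part is (ii), since the $\L^2$ data fit no longer controls the velocity gradient and the flow model must be invoked a second time. Here I would apply Theorem~\ref{thm:2} again, now with the optimal data $\f=\fa$, $\g=\ga$, $\h=\ha$, to obtain $\|\ua-\udag\|_{\H^1(\Omega)}+\|\pa-\pdag\|_{\l^2(\Omega)}\le C(\|\fa-\fdag\|_{\L^2(\Omega)}+\|\ga-\gdag\|_{\H^1(\dOin)}+\|\ha-\hdag\|_{\L^2(\dOout)}+\delta)$. Each parameter mismatch is split via $\|\fa-\fdag\|\le\|\fa-\f^*\|+\|\f^*-\fdag\|$, and analogously for $\g$ and $\h$, and is then bounded by the second working estimate above together with $M$; squaring the result reproduces exactly the $\delta^2+\delta^2/\alpha+M^2$ structure. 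Since Theorem~\ref{thm:2} estimates velocity and pressure simultaneously, the same right-hand side also controls $\|\pa-\pdag\|_{\l^2(\Omega)}^2$, as asserted. The main obstacle is precisely this transition from an $\L^2$-coercive cost to an $\H^1$ velocity bound: it is what forces the second use of the linearization estimate and is responsible for the appearance of the factor $1/\alpha$ in (ii).
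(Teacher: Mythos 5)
Your proposal is correct and follows the paper's proof in all essentials: you bound the cost at the admissible triple $(\fdag,\gdag,\hdag)$ via the linearization estimate $\|\widehat\u-\udag\|_{\H^1(\Omega)}\le C\delta$, exploit minimality to extract exactly the paper's two working estimates $\|\ua-\ud\|_{\L^2(\Omega)}^2\le C\delta^2+\alpha M^2$ and $\|\fa-\f^*\|^2_{\L^2(\Omega)}+\dots\le C\delta^2/\alpha+M^2$ (with $M^2$ your model-mismatch quantity), obtain (i) by the triangle inequality through $\ud$, and obtain (ii) from a stability estimate for the state system combined with the splitting $\fa-\fdag=(\fa-\f^*)+(\f^*-\fdag)$. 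The one place you deviate is the stability step in (ii): you cite Theorem~\ref{thm:2} directly with data $(\fa,\ga,\ha)$, whereas the paper decomposes $\ua-\udag$ through $\widehat\u=\u(\fdag,\gdag,\hdag)$ and bounds $\ua-\widehat\u$ by the argument of Theorem~\ref{thm:1}; since $\ua$ and $\widehat\u$ solve the linearized system with the \emph{same} convective field $\ud$, their difference satisfies that system with the data differences as right-hand side, and the resulting constant depends only on $\|\ud\|_{\L^3(\Omega)}$, $\nu$ and the geometry. Your shortcut is sound in substance but glosses over one point: the constant in Theorem~\ref{thm:2} ``depends on the bounds for the data'', and the data $(\fa,\ga,\ha)$ are not uniformly bounded in $\alpha$ --- by your own second working estimate their norms may grow like $\delta/\sqrt{\alpha}$ --- so the cross term contributes an extra $\delta^2/\sqrt{\alpha}$ before squaring, i.e.\ $\delta^4/\alpha$ after. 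This is absorbed into $C\,\delta^2/\alpha$ once $\delta\le\delta_0$ is assumed, so the asserted bound survives, but you should either state that assumption or, better, replace the citation of Theorem~\ref{thm:2} by Theorem~\ref{thm:1} applied to $\ua-\widehat\u$ as in the paper, which yields a genuinely $(\alpha,\delta)$-independent constant at no extra cost.
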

\begin{proof}
Let $(\widehat\u,\widehat\p)$ denote the solution of \eqref{eq:var1}--\eqref{eq:var2} with $\f$, $\g$, $\h$ replaced by 
$\fdag$, $\gdag$, and $\hdag$, respectively. 
Then $\|\widehat\u-\udag\|_{\H^1(\Omega)} + \|\widehat \p - \pdag\|_{L^2(\Omega)} \le C \|\udag-\ud\|_{\L^3(\Omega)}$,
which follows like the estimate for the function $\w$ in the proof of Theorem~\ref{thm:2}. 
By definition of $\ua$ as a minimizer, we further have 
\begin{align*}
&\|\ua-\ud\|^2_{\L^2(\Omega)} \\
&\le \|\ua-\ud\|^2_{\L^2(\Omega)} + \alpha \big( \|\fa-\f^*\|_{\L^2(\Omega)}^2 + \|\ga-\g^*\|_{\H^1(\dOin)}^2 + \|\ha-\h^*\|_{\L^2(\dOout)}^2\big)   \\
&\le \|\widehat\u-\ud\|^2_{\L^2(\Omega)} + \alpha \big( \|\fdag-\f^*\|_{\L^2(\Omega)}^2 + \|\gdag-\g^*\|_{\H^1(\dOin)}^2 + \|\hdag-\h^*\|_{\L^2(\dOout)}^2\big).  
\end{align*}
The first assertion now follows by combining the two estimates and using the assumption on the data error together with the continuous embedding of $L^3(\Omega)$ into $L^2(\Omega)$.
For the second estimate, we use the triangle inequality 
to obtain
\begin{align*}
&\|\ua-\udag\|_{\H^1(\Omega)} + \|\pa-\pdag\|_{\L^2(\Omega)} \\
& \le (\|\widehat \u-\udag\|_{\H^1(\Omega)} +  \|\widehat \p-\pdag\|_{\L^2(\Omega)}) + (\|\ua-\widehat \u\|_{\H^1(\Omega)} + \|\pa-\widehat \p\|_{\L^2(\Omega)}).
\end{align*}
The first term can be estimated by $\|\widehat \u - \udag\|_{\H^1(\Omega)}  + \|\pa - \widehat \p\|_{L^2(\Omega)}\le C \delta$ as above. 
Proceeding as in the proof of Theorem~\ref{thm:1}, the remaining term can be bounded by 
\begin{align*}
&\|\ua-\widehat\u\|_{\H^1(\dO)} + \|\pa-\widehat \p\|_{\L^2(\Omega)} \\
&\le C \big( \|\fa-\fdag\|_{\L^2(\Omega)} + \|\ga-\gdag\|_{\H^1(\dOin)} + \|\ha-\hdag\|_{\L^2(\dOout)}\big). 
\end{align*}
Let us consider the first term on the right hand side in more detail. 
Via the triangle inequality, we get $\|\fa-\fdag\|_{\L^2(\Omega)} \le \|\fa-\f^*\|_{\L^2(\Omega)} + \|\fdag-\f^*\|_{\L^2(\Omega)}$. 
Note that the second term already appears in the final result. 
By the definition of the minimizers, the first term can be further estimated by
\begin{align*}
&\|\fa-\f^*\|^2_{\L^2(\Omega)} \\
&\le \alpha^{-1} \|\ua-\ud\|_{\L^2(\Omega)}^2 + \|\fa-\f^*\|_{\L^2(\Omega)}^2 + \|\ga-\g^*\|_{\H^1(\dOin)}^2 + \|\ha-\h^*\|_{\L^2(\dOout)}^2   \\
&\le \alpha^{-1} \|\widehat\u-\ud\|^2_{\L^2(\Omega)} + \|\fdag-\f^*\|_{\L^2(\Omega)}^2 + \|\gdag-\g^*\|_{\H^1(\dOin)}^2 + \|\hdag-\h^*\|_{\L^2(\dOout)}^2.  
\end{align*}
Together with the estimates for $\widehat\u-\udag$ and the bound on the data error, this yields
\begin{align*}
\|\fa-\f^*\|^2_{\L^2(\Omega)} \le C \big(\delta^2/\alpha +   \|\fdag-\f^*\|_{\L^2(\Omega)}^2 + \|\gdag-\g^*\|_{\H^1(\dOin)}^2 + \|\hdag-\h^*\|_{\L^2(\dOout)}^2 \big).
\end{align*}
The same arguments lead to estimates for $\|\ga-\g^*\|_{\H^1(\dOin)}^2$ and $\|\ha-\h^*\|_{\L^2(\dOout)}$
which completes the proof of the second assertion.
\end{proof}

\begin{remark}
The estimates in Theorem~\ref{thm:4} show that a proper choice of the regularization parameter $\alpha$
allows to obtain a balance between data fit and model error. In particular, a good 
fit to the measurements can always be obtained by choosing $\alpha$ sufficiently small.
If the proposed flow model is a good description of the physical conditions, i.e., if the model errors $\|\f^\dag-\f^*\|$, $\|\gdag-\g^*\|$, and $\|\hdag - \h^*\|$ are sufficiently small, one can actually choose the regularization parameter in the order of one and still obtain a good fit for the velocity field in the stronger norm and also for the pressure.
\end{remark}

\subsection{Poisseuille flow} \label{sec:poisseuillerec}

Let us return to the setting discussed in Section~\ref{sec:poisseuille}.
In this case, we may choose $\f^*=\fdag$, $\g^*=\gdag$, and $\h^*=\hdag$ 
with $\fdag=\zero$ and profiles $\gdag$, $\hdag$ computed from the Poisseuille law \eqref{eq:poisseuille}. 
The estimates of the previous theorem then simplify to
\begin{align*}
\|\ua - \udag\|_{\L^2(\Omega)} \le C \delta \qquad \text{and} \qquad 
\|\ua - \udag\|_{\H^1(\Omega)} + \|\pa-\pdag\|_{L^2(\Omega)} \le C \delta (1 + 1 / \sqrt{\alpha}).
\end{align*}
The reconstruction errors will therefore be in the order of the measurement errors if we choose 
the regularization parameter $\alpha$ in the order of one!  
In particular, any fixed choice of the regularization parameter will lead to $O(\delta)$ convergence for the velocity errors in the $L^2$- and the $H^1$-norm. 
A more sophisticated choice of the regularization parameter is however required if the underlying flow model does not describe the physical situation sufficiently well, i.e., if the model data $\f^*$, $\g^*$, and $\h^*$ are not chosen appropriately.

\subsection{Further properties} \label{sec:further}

As a final step of our theoretical considerations, let us comment on two properties of the reduced cost functional $J_\alpha$ and the minimizers $\ua$. 
\begin{theorem} \label{thm:5}
Let $\ua=\u(\fa,\ga,\ha)$ be defined as above. Then
\begin{itemize}
 \item[(i)] $\min_{\f,\g,\h}J_\alpha(\f,\g,\h) \le \min_{\f,\g,\h} J_\beta(\f,\g,\h)$ whenever $\alpha \le \beta$.
\smallskip
 \item[(ii)] $\ua \stackrel{L^2}{\to} \u^{SF}$ with $\alpha \to 0$, where $\u^{SF}$ is the solution of
 \eqref{eq:B1}--\eqref{eq:B2} with $\alpha=0$. 
\end{itemize}
\end{theorem}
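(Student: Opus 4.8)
Here is how I would approach the two claims; the first is elementary and the second is a regularization-limit argument whose only delicate ingredient is a density (approximate controllability) property of the attainable velocity fields.

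The first assertion follows directly from monotonicity in $\alpha$. For fixed controls $(\f,\g,\h)$ the regularization term $\|\f-\f^*\|_{\L^2(\Omega)}^2+\|\g-\g^*\|_{\H^1(\dOin)}^2+\|\h-\h^*\|_{\L^2(\dOout)}^2$ is nonnegative, so $\alpha\mapsto J_\alpha(\f,\g,\h)$ is nondecreasing and $J_\alpha\le J_\beta$ holds pointwise whenever $\alpha\le\beta$. Denoting by $(\f_\beta,\g_\beta,\h_\beta)$ a minimizer of $J_\beta$ and using optimality of the $\alpha$-solution, I get $\min_{\f,\g,\h}J_\alpha\le J_\alpha(\f_\beta,\g_\beta,\h_\beta)\le J_\beta(\f_\beta,\g_\beta,\h_\beta)=\min_{\f,\g,\h}J_\beta$, which is (i).

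For the second assertion I would first recast both filters as $\L^2$-projections. Let $W$ be the closed subspace of $\L^2(\Omega)$ given by the $\L^2$-closure of $\{\v\in\H^1(\Omega):\div\v=0,\ \v=\zero\text{ on }\dOwall\}$. The constraint in \eqref{eq:B1}--\eqref{eq:B2} at $\alpha=0$ is precisely $\v\in W$, so $\u^{SF}$ is the $\L^2$-orthogonal projection of $\ud$ onto $W$ and $\ud-\u^{SF}\perp W$. Every reconstructed velocity satisfies $\div\ua=0$ and $\ua=\zero$ on $\dOwall$, hence $\ua\in W$, which already yields the lower bound $\|\ua-\ud\|_{\L^2(\Omega)}\ge\|\u^{SF}-\ud\|_{\L^2(\Omega)}$ for every $\alpha>0$.

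The crux is to show that the attainable set $\mathcal U=\{\u(\f,\g,\h)\}$ is $\L^2$-dense in $W$, and this is the step I expect to be the \emph{main obstacle}. I would argue by duality: if $\w\in W$ is $\L^2$-orthogonal to $\mathcal U$, I represent the functional $(\f,\g,\h)\mapsto(\w,\u(\f,\g,\h))_{\L^2(\Omega)}$ through the adjoint state $(\z,\pi)$ solving the problem adjoint to \eqref{eq:var1}--\eqref{eq:var2} with right-hand side $\w$. Varying $\f$ over all of $\L^2(\Omega)$ forces $\z\equiv\zero$ in $\Omega$; feeding this back into the adjoint momentum balance shows that $\w$ is $\L^2$-orthogonal to all divergence-free fields vanishing on $\dOin\cup\dOwall$, and after accounting for the outflow contribution carried by $\pi$ one concludes $\w=0$. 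The antisymmetric form of $c(\ud;\cdot,\cdot)$ keeps every pairing well defined under the sole hypothesis $\ud\in\L^3(\Omega)$, so no additional regularity of the measurements is needed; it is precisely here that the low regularity of $\ud$ and the inhomogeneous inflow/outflow conditions must be controlled.

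With density in hand the convergence follows by a standard argument combined with the projection identity. Given $\eps>0$, choose $\bar\u\in\mathcal U$ with associated data $(\bar\f,\bar\g,\bar\h)$ and $\|\bar\u-\u^{SF}\|_{\L^2(\Omega)}<\eps$. Since these data are feasible for \eqref{eq:min1}--\eqref{eq:min2}, optimality of the $\alpha$-solution gives
\begin{align*}
\|\ua-\ud\|_{\L^2(\Omega)}^2
&\le \|\bar\u-\ud\|_{\L^2(\Omega)}^2 \\
&\quad + \alpha\big(\|\bar\f-\f^*\|_{\L^2(\Omega)}^2+\|\bar\g-\g^*\|_{\H^1(\dOin)}^2+\|\bar\h-\h^*\|_{\L^2(\dOout)}^2\big).
\end{align*}
Letting $\alpha\to0$ and then $\eps\to0$ yields $\limsup_{\alpha\to0}\|\ua-\ud\|_{\L^2(\Omega)}\le\|\u^{SF}-\ud\|_{\L^2(\Omega)}$, which together with the lower bound forces $\|\ua-\ud\|_{\L^2(\Omega)}\to\|\u^{SF}-\ud\|_{\L^2(\Omega)}$. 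Finally, since $\ua-\u^{SF}\in W$ while $\ud-\u^{SF}\perp W$, the Pythagorean identity gives $\|\ua-\u^{SF}\|_{\L^2(\Omega)}^2=\|\ua-\ud\|_{\L^2(\Omega)}^2-\|\u^{SF}-\ud\|_{\L^2(\Omega)}^2\to0$, that is $\ua\to\u^{SF}$ in $\L^2(\Omega)$.
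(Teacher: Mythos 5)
Your proof is correct, and part (ii) takes a genuinely different route from the paper in its concluding step. Part (i) is identical to the paper's argument. For part (ii), the paper also starts from the same key comparison: it picks a smooth divergence-free approximation $\widetilde\u^{SF}$ of $\u^{SF}$, reads off the residuals $(\widetilde\f,\widetilde\g,\widetilde\h)$ by plugging it into \eqref{eq:lin1}--\eqref{eq:lin5}, and derives $\limsup_{\alpha\to0}\|\ua-\ud\|_{\L^2(\Omega)}\le\|\u^{SF}-\ud\|_{\L^2(\Omega)}$ exactly as you do. But from there the paper argues by weak compactness: boundedness of $\{\ua\}$ gives a weakly convergent subsequence, weak lower semi-continuity of the norm plus the limsup bound and the constraint $\div\ua=0$ identify the weak limit as $\u^{SF}$ via uniqueness of the solenoidal-filter minimizer. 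Your endgame is more elementary and in fact sharper as written: observing that $\u^{SF}$ is the $\L^2$-orthogonal projection of $\ud$ onto the closed subspace $W$, you get the lower bound $\|\ua-\ud\|\ge\|\u^{SF}-\ud\|$ for free and then upgrade residual convergence to \emph{strong} convergence of the whole family through the Pythagorean identity, whereas the paper's written argument directly delivers only weak subsequential convergence (strong convergence follows by the standard ``weak convergence plus norm convergence'' remark, which the paper leaves implicit). Two further comments. First, the step you flag as the main obstacle --- $\L^2$-density of the attainable set $\mathcal U$ in $W$ via an adjoint/duality argument --- is heavier machinery than needed: since the controls $(\f,\g,\h)$ are by construction precisely the residuals of the model, every sufficiently smooth solenoidal field vanishing on $\dOwall$ (with inflow trace in $\H^1_0(\dOin)$) is attainable outright, so all that is required is the classical density of smooth solenoidal fields, which is what the paper invokes; your adjoint sketch is plausible but incomplete (the inflow contribution must be killed using variations of $\g$, not only $\f$), and fortunately your convergence argument never needs more than approximability of $\u^{SF}$ itself. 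Second, your identification of the constraint set of \eqref{eq:B1}--\eqref{eq:B2} at $\alpha=0$ with $W$ builds in the no-slip information on $\dOwall$, which the paper's formulation states only as $\div\u=0$; the paper's proof tacitly needs the same compatibility (its $\widetilde\u^{SF}$ must vanish on $\dOwall$ to be admissible in \eqref{eq:lin1}--\eqref{eq:lin5}), so by making $W$ explicit you have actually surfaced a hypothesis the paper uses silently rather than introduced a new gap.
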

\begin{proof}
By definition of $\ua$ as minimizer, we have 
\begin{align*}
\min_{\f,\g,\h} J_\alpha(\f,\g,\h) 
&= J_\alpha(\fa,\ga,\ha) \le J_\alpha(\f_\beta,\g_\beta,\h_\beta) 
\le J_\beta (\f_\beta,\g_\beta,\h_\beta), 
\end{align*}
where in the last step we used the condition $\alpha \le \beta$ and the positivity of the regularization term.
This already yields the first assertion.

To show the second, let us note that for any $\eps>0$ one can find a smooth divergence free approximation $\widetilde\u^{SF}$ for the velocity field $\u^{SF}$ that is obtained by \eqref{eq:B1}--\eqref{eq:B2} with $\alpha=0$
and such that $\|\u^{SF}-\widetilde\u^{SF}\|_{\L^2(\Omega)} \le \eps$.
By plugging $\widetilde \u^{SF}$ into \eqref{eq:lin1}--\eqref{eq:lin5}, we obtain corresponding 
residuals $\widetilde \f$, $\widetilde \g$, and $\widetilde \h$. Using the definition of $\ua$, we further get 
\begin{align*}
&\|\ua-\ud\|^2_{\L^2(\Omega)} \\
&\le \|\ua-\ud\|^2_{\L^2(\Omega)} + \alpha \big( \|\fa-\f^*\|_{\L^2(\Omega)}^2 + \|\ga-\g^*\|_{\H^1(\dOin)}^2 + \|\ha-\g^*\|_{\L^2(\dOout)}^2\big)  \\
&\le \|\widetilde\u^{SF}-\ud\|^2_{\L^2(\Omega)} + \alpha \big( \|\widetilde\f-\f^*\|_{\L^2(\Omega)}^2 + \|\widetilde\g-\g^*\|_{\H^1(\dOin)}^2 + \|\widetilde\h-\h^*\|_{\L^2(\dOout)}^2\big).
\end{align*}
This shows that 
\begin{align*}
\lim\sup_{\alpha \to 0} \|\ua - \ud\|_{\L^2(\Omega)} \le \|\widetilde\u^{SF} - \ud\|_{\L^2(\Omega)} \le \|\u^{SF} - \ud\|_{\L^2(\Omega)}+\eps
\end{align*}
for any $\eps>0$. 
We therefore conclude that $\lim\sup_{\alpha \to 0} \|\ua - \ud\|_{\L^2(\Omega)} \le \|\u^{SF} - \ud\|_{\L^2(\Omega)}$. 
Since $\|\ua\|_{L^2(\Omega)} \le C$ for all $\alpha>0$, we can select a subsequence $\{\u_{\alpha'}\}$ converging weakly to some $\bar \u \in \L^2(\Omega)$, and by the lower semi-continuity of the norm, we can deduce
\begin{align*}
\|\bar \u - \ud\|_{\L^2(\Omega)} 
\le \lim\inf_{\alpha'\to 0}  \|\u_{\alpha'} - \ud\|_{\L^2(\Omega)} 
\le \lim\sup_{\alpha' \to 0} \|\u_{\alpha'} - \ud\|_{\L^2(\Omega)}\le \|\u^{SF} - \ud\|_{\L^2(\Omega)}
\end{align*}
Moreover, since $\div \ua=0$ for all $\alpha>0$, we also have $\div \bar \u=0$. 
But since $\u^{SF}$ is the unique minimizer of \eqref{eq:B1}--\eqref{eq:B2}, we may conclude that $\bar \u = \u^{SF}$.
\end{proof}
\begin{remark}
For $\alpha \to 0$, not only the minimal values of the cost functional $J_\alpha$ but also the data residuals $\|\ua-\ud\|_{\L^2(\Omega)}$ can be shown to decrease monotonically. This allows us to choose the regularization parameter $\alpha$ via a \emph{discrepancy principle}. 
The fact that the reconstructed velocity fields $\ua$ converges to the solution $\u^{SF}$ of the solenoidal filtering problem with $\alpha \to 0$ indicates that our reconstruction approach yields an 
enhancement of and is at least as stable as the solenoidal filtering \eqref{eq:B1}--\eqref{eq:B2} with $\alpha=0$.
\end{remark}

\section{Numerical realization} \label{sec:realization} \setcounter{equation}{0}

In order to obtain computational algorithms, we still have to discretize
the reconstruction method proposed in this paper.
Since the numerical approximation of optimal control problems is well-understood, 
we only sketch the main ideas here.   

\subsection{Discretization of the fluid-dynamic model} \label{sec:fem}

For discretization of the state system \eqref{eq:lin1}--\eqref{eq:lin5}, we use a standard Galerkin method with an inf-sup stable pair of finite element spaces \cite{boffi08,girault79}. 
This leads to an algebraic system of the form 
\begin{subequations}
\begin{align}
\ttA \ttu  + \ttB^\top \ttp &= \ttM \ttf + \tfrac{1}{\eps} \ttR \ttE \ttg + \ttN \tth, \label{eq:lin1h}\\
                  \ttB \ttu &= 0,                                                \label{eq:lin2h}         
\end{align}
\end{subequations}
where $\ttA = \nu \ttK + \ttC(\ttu^\delta) + \tfrac{1}{\eps} \ttR$,
the matrix $\ttK$ represents the vector Laplacian, $\ttM$ the mass matrix,
$\ttC(\ttu^\delta)$ is the anti-symmetric convective term, 
and $\ttB$ is the discrete divergence operator. The Dirichlet boundary conditions are incorporated here by a penalty approach with $\eps$ being a small parameter. The matrix $\ttR$ represents the corresponding 
integrals over the boundary $\dOin \cup \dOwall$, and $\ttE$ realizes an extension of boundary values into the domain by zero. Similarly, the matrix $\ttN$ accounts for integrals over $\dOout$. The vector $\ttu^\delta$ denotes the finite element representation of the velocity measurements.
When using a finite element discretization, all matrices will be sparse.
For details on the implementation, we refer to standard textbooks \cite{girault79,temam84}.

\subsection{Discretization of the optimal control problem} \label{sec:disc}

Using the above notation, the discretized optimal control problem can be written as
\begin{subequations}
\begin{align} 
&\min_{\ttu,\ttp,\ttf,\ttg,\tth} \|\ttu - \ttu^\delta\|_\ttM^2 + \alpha \big( \|\ttf-\ttf^*\|_\ttM^2 + \|\ttg-\ttg^*\|_\ttG^2 + \|\tth-\tth^*\|_\ttH^2\big) \label{eq:min1h}\\
&\qquad \text{s.t. } \eqref{eq:lin1h}-\eqref{eq:lin2h}.   \label{eq:min2h}
\end{align}
\end{subequations}
Here $\|\ttr\|_\ttM^2 = \ttr^\top \ttM \ttr$ denotes the norm induced by the positive definite matrix $\ttM$. 
The Gramian matrices $\ttG$ and $\ttH$ induce the corresponding norms on the boundary.

The first order optimality conditions for \eqref{eq:min1h}--\eqref{eq:min2h} are obtained by differentiating the associated Lagrangian. Since the problem under investigation is quadratic and strictly convex, the first order optimality conditions are necessary and sufficient. 
Due to the use of finite elements, the discrete optimality system is sparse symmetric and indefinite and can be solved efficiently by appropriate preconditioned iterative methods \cite{draganescu13,schoeberl07,takacs15}.  

The discrete state system \eqref{eq:lin1h}--\eqref{eq:lin2h} can again be used to express $\ttu=\ttu(\ttf,\ttg,\tth)$ and $\ttp=\ttp(\ttf,\ttg,\tth)$ 
as functions of the data. This yields the corresponding reduced problem 
\begin{align} \label{eq:redh}
\min_{\ttf,\ttg,\tth} \|\ttu(\ttf,\ttg,\tth) - \ttu^\delta\|_\ttM^2 + \alpha \big( \|\ttf-\ttf^*\|_\ttM^2 + \|\ttg-\ttg^*\|_\ttG^2 + \|\tth-\tth^*\|_\ttH^2\big)
\end{align}
which is quadratic and strictly convex and can be solved by a preconditioned conjugate gradient method.
The computation of the gradients can be realized efficiently via adjoint problems which have a similar structure as \eqref{eq:lin1h}--\eqref{eq:lin2h}; see e.g. \cite{egger11} for more details on the implementation of a related problem.

\subsection{Notes on other filtering approaches} \label{sec:filt}

For later reference, let us also sketch the implementation of the smoothing and the solenoidal filtering approaches 
outlined in the introduction. 
Using the same notation as above, the discrete version of the smoothing filter \eqref{eq:A} 
can be expressed as 
\begin{align} \label{eq:Ah}
\min_\ttu \|\ttu-\ttu^\delta\|_\ttM^2 + \alpha \|\ttu\|_\ttK^2
\end{align}
and the minimizer is characterized by the regularized normal equations 
\begin{align} \label{eq:normalh}
(\ttM + \alpha \ttK) \ttu &= \ttM \ttu^\delta. 
\end{align}
This system is symmetric and positive definite and can be solved efficiently 
by the conjugate gradient method. Parameter robust multigrid preconditioning \cite{schoeberl99}
may be applied to obtain an algorithm of optimal complexity. 

Also the discrete version of the solenoidal filtering approach \eqref{eq:B1}--\eqref{eq:B2} 
can be written in a similar manner. Using the above notation, we obtain
\begin{align} \label{eq:Bh}
\min_\ttu  \|\ttu-\ttu^\delta\|_\ttM^2 + \alpha \|\ttu\|_\ttK^2 \qquad 
\text{s.t. }  \ttB \ttu= 0. 
\end{align}
The optimality system for this constrained minimization problem reads 
\begin{align}
(\ttM + \alpha \ttK) \ttu   + \ttB^\top \ttp &= \ttM \ttu^\delta \label{eq:B1h}\\
\ttB \ttu &= 0. \label{eq:B2h}
\end{align}
The structure of this system is similar to that of the state system \eqref{eq:lin1h}--\eqref{eq:lin2h},
which is why we denote the Lagrange multiplier for the divergence constraint again by $\ttp$ here.
Existence and uniqueness of a solution is guaranteed for all $\alpha \ge 0$, 
if inf-sup stable finite elements are used for the discretization of $\u$ and $\p$. 
Again, iterative methods with multigrid preconditioning may
be applied for the efficient solution \cite{schoeberl98,zulehner02}. 
%

\subsection*{Summary} 
As can be seen from the discussion above, all linear filtering approaches considered in this paper can be discretized systematically and in a uniform framework by finite element methods. The resulting linear optimality systems 
can then always be solved efficiently by iterative solvers and appropriate preconditioning techniques. 
All resulting filters can therefore be considered to be algorithms of optimal complexity.

\section{Computational results} \label{sec:computation} \setcounter{equation}{0}

In order to illustrate the properties of our reconstruction approach, 
we now present some preliminary computational results. 
For ease of presentation, we only consider a simple two-dimensional test problem here.
In all simulations, we use a regular triangulation of the computational domain and 
we assume that the measured velocity field $\ud$ is given at each vertex of the mesh.
For the discretization of the flow equations, we use here the {\sc Mini}  element \cite{boffi08,boffi13}. Other inf-sup stable finite elements, in particular, such leading to exactly divergence free discrete velocity fields, 
could however be used as well.

\subsection{A test problem for channel flow} \label{sec:case1}

We consider the steady laminar flow between two parallel plates already discussed in Section~\ref{sec:poisseuille}.
As a computational domain, we choose here $\Omega=(0,L) \times (0,H)$ with boundaries $\dOin=\{0\} \times (0,H)$, $\dOout=\{L\} \times (0,H)$, and $\dOwall = (0,L) \times \{0,H\}$. 
For our simulations, we set $H=1$ and $L=5$. 
The other model parameters are set to $\nu=0.01$, $p_0=1$, and $p_L=0$. 
The exact solution of \eqref{eq:ex1}--\eqref{eq:ex2} given by the Poisseuille law \eqref{eq:poisseuille}
then reads
\begin{align*} 
\pdag(x,y)=1-x/5 
\qquad \text{and} \qquad 
\udag(x,y) = (10y (1-y),0)
\end{align*}
and the corresponding right hand side and boundary data are 
\begin{align*}
 \fdag(x,y)=(0,0), \qquad \gdag(0,y)=\left(10 y (1-y),0\right), \quad \text{and} \quad \hdag(5,y)=(-50 y^2 (1-y)^2,0).
\end{align*}
These functions will serve as the reference solution and data for our computational tests.

\subsection{Linearization error}
In a first test, we would like to illustrate the estimates for the linearization error given in Theorem~\ref{thm:2}.
To do so, we construct perturbed data $\ud$ by adding random noise to $\udag$ such that $\|\ud-\udag\|_{\L^3(\Omega)}=\delta$,
and then compute the solution $(\u,\p)$ of the linearized problem \eqref{eq:lin1}--\eqref{eq:lin5} with data $\f=\zero$, $\g=\g^\dag$, and $\h=\h^\dag$ by the finite element method outlined above. The resulting errors are displayed in Figure~\ref{fig:linerror}. 

\begin{figure}[ht!]
\includegraphics[width=0.45\textwidth]{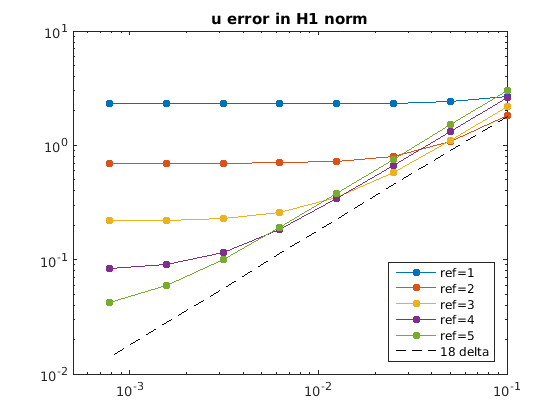}
\hspace*{1em}
\includegraphics[width=0.45\textwidth]{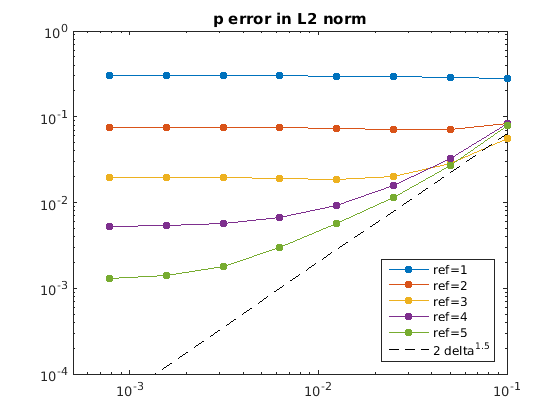}
\caption{\label{fig:linerror}Linearization errors $\|\u-\udag\|_{\H^1(\Omega)}$ (left) and $\|\p-\pdag\|_{L^2(\Omega)}$ (right) for different values of the noise level $\delta$ and various levels of refinement. 
We obtain $O(\delta)$ convergence for the error in the velocity, as stated in Theorem~2 and even $O(\delta^{3/2})$ for the error in the pressure. For large meshsize $h$, a saturation due to discretization errors is observed.}
\end{figure}

The estimate of Theorem~\ref{thm:2} reads $\|\u-\udag\|_{H^1} + \|\p-\pdag\|_{L^2} \le C( \delta + \text{data errors})$. Note that on coarse meshes, the discretization error also contributes to the data error 
and we therefore observe a certain saturation phenomenon as $\delta$ goes to zero. 
A similar behaviour would be obtained in the presence of model errors other than the discretization error.
On refined meshes, the numerical results reveal the expected $O(\delta)$ convergence of the total linearization error as predicted by our theory.

\subsection{Verification of the estimates for the reconstruction error}

Let us next illustrate the two estimates of Theorem~\ref{thm:4}. 
To do so, we compute approximations for the minimizers of \eqref{eq:min1}--\eqref{eq:min2} by solving the discretized optimal control problem \eqref{eq:min1h}--\eqref{eq:min2h}. 
The tests are repeated for different values of the noise level $\delta$. 
Following the remarks in Section~\ref{sec:poisseuillerec}, we should obtain 
\begin{align*}
\|\ua-\udag\|_{\L^2(\Omega)} \le C \delta \qquad \text{and} \qquad \|\ua-\udag\|_{\H^1(\Omega)} \le C(\delta + \delta/\sqrt{\alpha})
\end{align*}
when setting $\f^* = \fdag$, $\g^*=\gdag$, and $\h^*=\hdag$. 
The first estimate does not depend on $\alpha$ while the second predicts a blow-up with $\alpha \to 0$, which is a manifestation of the ill-posedness of the underlying data smoothing problem. 
In the absence of model errors, we should however obtain errors in the size $O(\delta)$ in both, the $L^2$- and the $H^1$-norm, if $\alpha$ is chosen in the order of one.
To avoid the influence of discretization errors, we choose a rather fine mesh for all computations. 
In order to evaluate the influence of errors in the model data, we 
repeat the test with parameters $\f^* \ne \fdag$ such that $\|\f^*-\fdag\|_{\L^2(\Omega)} = 5$. 
%
The results of our numerical test are summarized in Figure~\ref{fig:errest}. 

\begin{figure}[ht!] 
%
\includegraphics[width=0.45\textwidth]{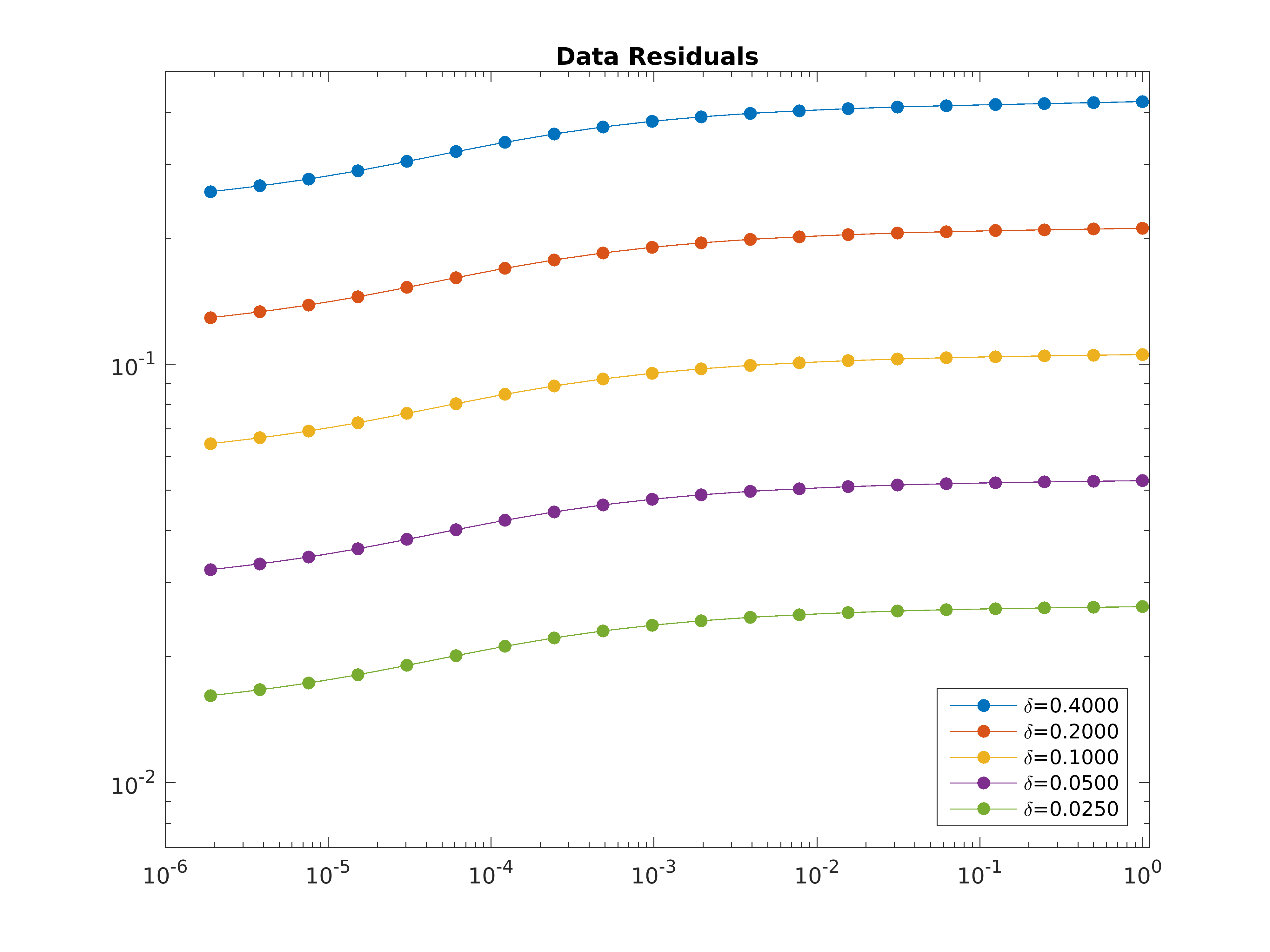}
\hspace*{1em}
\includegraphics[width=0.45\textwidth]{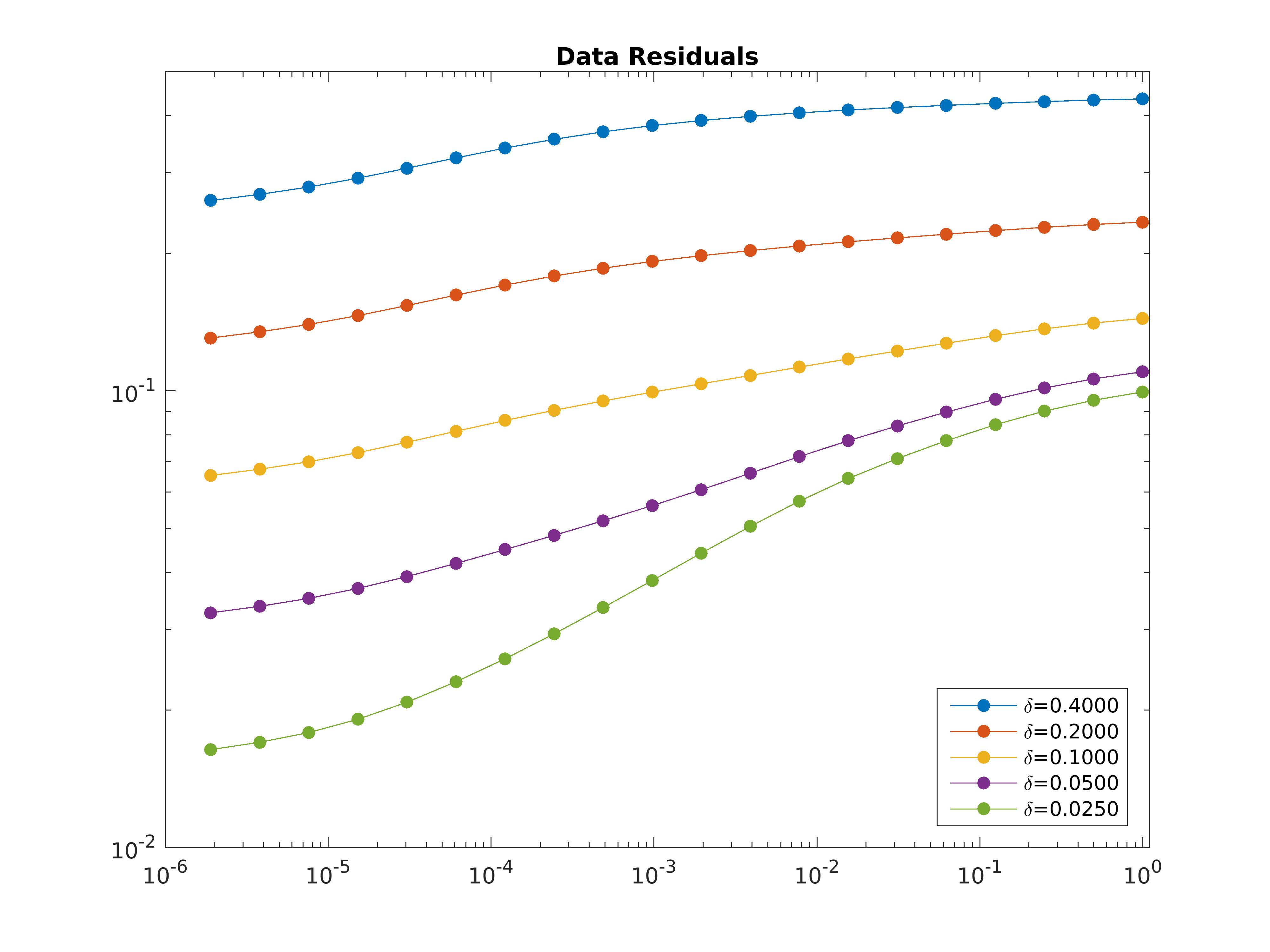}

\includegraphics[width=0.45\textwidth]{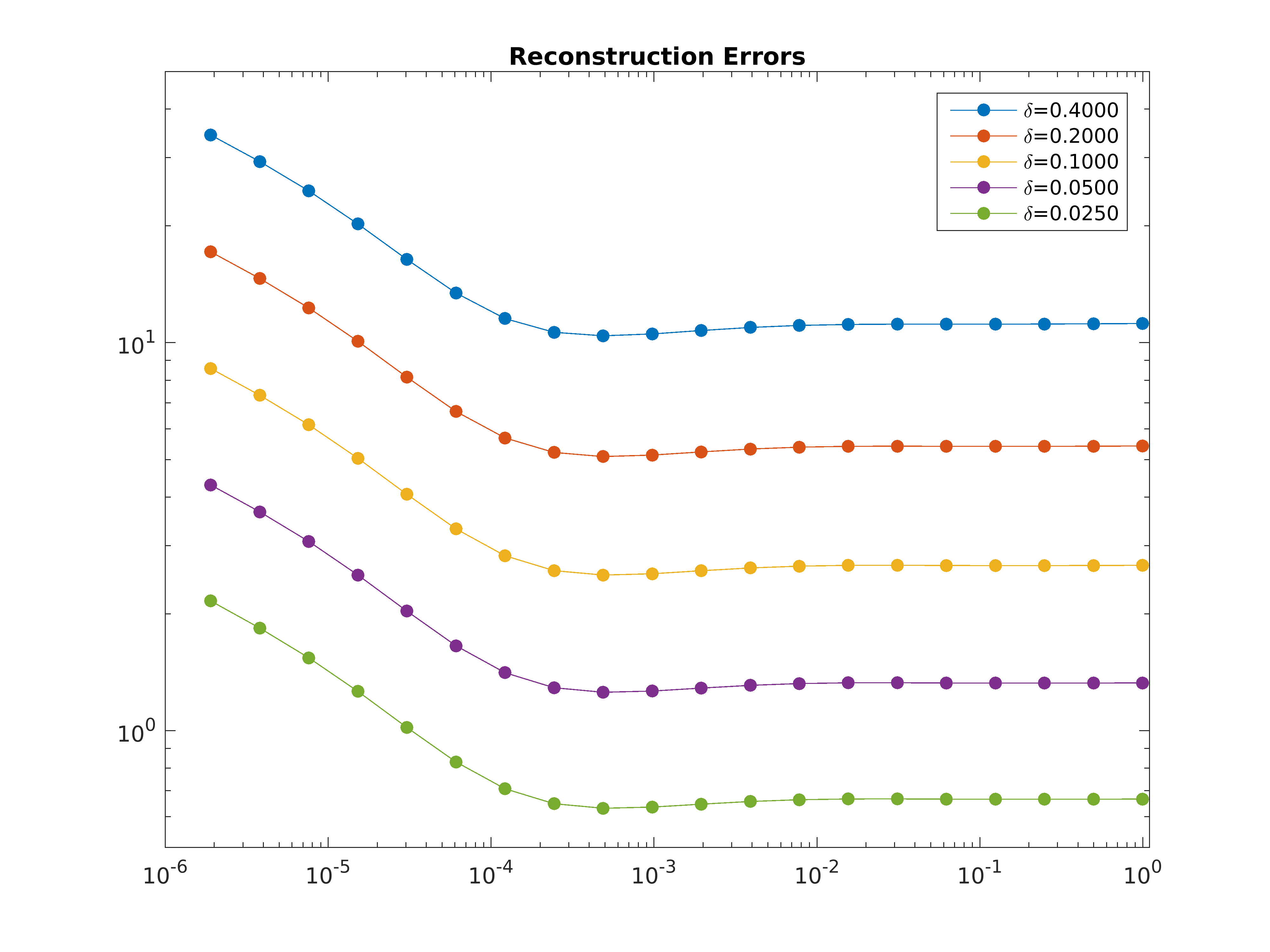} 
\hspace*{1em}
\includegraphics[width=0.45\textwidth]{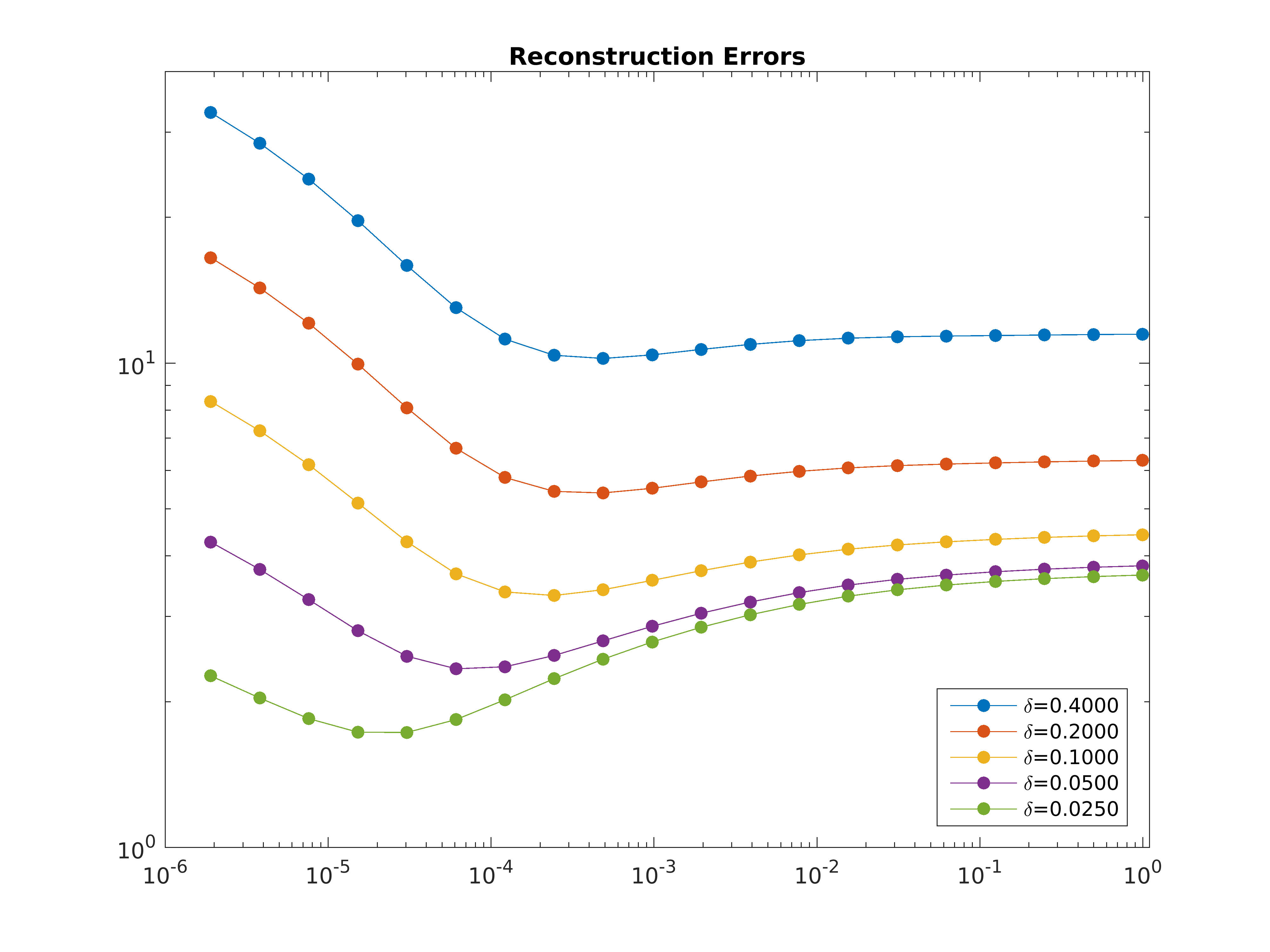}
\caption{\label{fig:errest}Convergence of residuals $\|\ua-\ud\|_{\L^2(\Omega)}$ and reconstruction errors $\|\ua-\udag\|_{\H^1(\Omega)}+\|\pa-\pdag\|_{L^2(\Omega)}$ with $\alpha \to 0$ using 
exact model data $\|\fdag-\f^*\|_{\L^2(\Omega)}=0$ (left) and perturbed model data $\|\fdag-\f^*\|_{\L^2(\Omega)}=5$ (right). 
The individual curves correspond to different values of the noise level $\delta$.}
\end{figure}

As predicted by our analysis, the data residuals $\|\ua-\ud\|_{\L^2(\Omega)}$ decrease monotonically in $\alpha$ and $\delta$, while the reconstruction errors $\|\ua-\udag\|_{\H^1(\Omega)} + \|\pa-\pdag\|_{L^2(\Omega)}$ show the expected semi-convergence behaviour with $\alpha \to 0$. 
These observations are in good agreement with the results of Theorem~\ref{thm:4} and the first assertion of Theorem~\ref{thm:5}.

\subsection{The discrepancy principle}

The monotonicity of the Tikhonov functional stated in Theorem~\ref{thm:5} allows to show that also the data residuals $\|\ua-\ud\|^2_{\L^2(\Omega)}$ are monotonically decreasing with $\alpha \to 0$, which can also be seen 
in the two plots in the first line of Figure~\ref{fig:errest}. 
This motivates the use of a \emph{discrepancy principle} for the choice of the regularization parameter 
in order to achieve an automatic balance of the error contributions due to data noise and model errors. 
Using the standard procedure \cite{engl96}, we define 
$$
\alpha_{dis}(\delta)=\max\{\alpha=\alpha_0 2^{-k} : \|\ua-\ud\|_{\L^2(\Omega)} \le \tau \delta, \quad k \in \NN \},
$$
where $\alpha_0>0$ and $\tau>1$ are some given parameters. 
For our computations, we choose $\alpha_0=1$ and $\tau=1.01$. 
In Table~\ref{tab:alpha}, we list the parameters selected by the discrepancy principle and the corresponding data residuals and reconstruction errors that are obtained for the test case corresponding to the right plot in Figure~\ref{fig:errest}. Note that the model data $\f^* \ne \fdag$ were improperly specified in this example.
For comparison, we also display the results for the values $\alpha_{opt}$ for which the reconstruction errors are minimized.

\begin{table}[ht!]  
\begin{tabular}{c||c|c|c||c|c|c}
$\delta$ &  $\alpha_{dis}$ & residual & error & $\alpha_{opt}$ & residual & error \\
\hline \hline

$0.4$ & $1$ & $0.435366$ & $11.4666$ & $4.8 \cdot 10^{-4}$ & $0.368707$ & $10.2167$ \\ \hline
$0.2$ & $3.1 \cdot 10^{-2}$ & $0.215954$ & $6.13863$ & $4.8 \cdot 10^{-4}$ & $0.185384$ & $5.39017$ \\ \hline
$0.1$ & $3.9 \cdot 10^{-3}$ & $0.10792$ & $3.88025$ & $2.4 \cdot 10^{-4}$ & $0.0905397$ & $3.31047$ \\ \hline
$0.05$ & $4.8 \cdot 10^{-4}$ & $0.0518771$ & $2.66815$ & $6.1 \cdot 10^{-5}$ & $0.0418346$ & $2.33633$ \\ \hline
$0.025$ & $1.2 \cdot 10^{-4}$ & $0.0258675$ & $2.01647$ & $3.0 \cdot 10^{-5}$ & $0.0207808$ & $1.72614$ 

\end{tabular} 
\medskip
\caption{\label{tab:alpha}Data residuals $\|\ua-\ud\|_{\L^2(\Omega)}$ and total reconstruction errors $\|\ua-\udag\|_{\H^1(\Omega)} + \|\pa-\pdag\|_{L^2(\Omega)}$ for  $\alpha_{dis}$ chosen by the discrepancy principle and 
the optimal choice $\alpha_{opt}$ minimizing the reconstruction error.} 
\end{table}

As expected, we observe that the discrepancy principle always chooses the regularization parameter $\alpha_{dis}$ 
somewhat larger than the optimal value $\alpha_{opt}$. 
Repeating the tests with exact model data $\f^* =\fdag$, $\g^* =\gdag$, and $\h^* =\hdag$, 
we obtain $\alpha_{dis} \approx$ constant independent of the noise level $\delta$. 
A similar behaviour is observed for $\alpha_{opt}$. This is in agreement with the first estimate in Theorem~\ref{thm:4} and can be seen in the plots on the left side of Figure~\ref{fig:errest}.

\subsection{Comparison with other filters}

To evaluate the overall performance of our filter, we would like to make also a short comparison 
with the other filtering approaches discussed in the introduction. 
As outlined in Section~\ref{sec:fem}, these can be implemented in a similar manner 
as the method presented in this paper which allows a fair comparison. 

In the following tests, we compare the smoothing filter $\eqref{eq:A}$, the solenoidal filtering  \eqref{eq:B1}--\eqref{eq:B2} with and without smoothing, and the fluid-dynamically consistent filter \eqref{eq:min1}--\eqref{eq:min2} presented in this paper. 
%
Whenever required, the regularization parameter $\alpha$ is selected via the discrepancy principle with $\tau=2$.
In Table~\ref{tab:comparison1}, we list various measures for the reconstruction error for the different choices of the filter. 
%

\begin{table}[ht!] 
\begin{tabular}{l||r|r|r|r}
method & $\|\u-\udag\|_{L^2}$ & $\|\u-\udag\|_{H^1}$ & $\|\p-\pdag\|_{L^2}$ & $\|\text{div}_h \u\|_{L^2}$ \\
\hline \hline

smoothing               & 0.119116 &  3.983561  & *.**** & 0.625567 \\ \hline
solenoidal ($\alpha=0$) & 0.081791 & 20.662327  & 1.290827 & 0.000000 \\ \hline
solenoidal w. smoothing & 0.117853 &  3.921923  & 1.290828 & 0.000000 \\ \hline
fluid-dyn. consistent   & 0.058466 &  3.066198  & 0.073987 & 0.000000

\end{tabular} 
\medskip
\caption{\label{tab:comparison1}Reconstruction errors and discrete divergence for different filters at noise level $\delta=0.1$ on 
a mesh with $9153$ 
vertices and $17920$ 
elements.}
\end{table}

The discrete divergence $\text{div}_h \u$ here corresponds to the projection of $\text{div} \, \u$ onto the discrete pressure space. 
All filters yield small errors for the velocity in the $L^2$-norm which is an immediate consequence of their construction. 
The $H^1$-norm of the error in the velocities is comparable for the filters involving some sort of smoothing. For the solenoidal filter without smoothing, the $H^1$-norm errors increase with decreasing meshsize
due to the ill-posedness of the reconstruction problem.  
The smoothing filter yields a velocity reconstruction which is not discrete divergence free and no information about the pressure is obtained. 
The three filters involving a divergence constraint yield some reconstruction of the pressure. Those obtained with the fluid-dynamically consistent filter are however by an order of magnitude better than those obtained with the solenoidal filters. 
In summary, the fluid-dynamically consistent filter proposed in this paper yields the best reconstruction of the flow fields with respect to all error measures listed in the table.

\section{Discussion} \label{sec:discussion} \setcounter{equation}{0}

In this paper we considered the reconstruction of the velocity and pressure fields of an incompressible fluid from distributed measurements of the flow velocities. 
For the stable solution of this inverse problem, we considered a novel filter which minimizes a 
weighted sum of the data residual and the mismatch of a specified flow model. This strategy was
formulated as an optimal control problem constrained by the prescribed flow model.

In order to guarantee the well-posedness of our approach, we utilized a linearized flow model which directly incorporated the measured velocity field. This allowed us to show the existence and uniqueness of minimizers and to derive estimates for the reconstruction errors in various norms. The theoretical results were illustrated by numerical tests including a comparison to other filters discussed in the literature. 

The strategy of using a linearized flow model as a constraint in the reconstruction process could be generalized in various ways: 
While we directly used the velocity measurements in order to specify our 
linearized flow model here, some pre-filtered velocity field could be used as well. 
Our analysis also covers this case and could possibly be refined leading to sharper estimates. 
Repeating the argument, one could also define an incremental reconstruction approach. 
Preliminary numerical tests for such multi-step algorithms showed a further significant improvement of the reconstructed flow fields. A full analysis would however exceed the scope of the current presentation. 
In order to handle more general flow regimes, some sort of turbulence model should be incorporated as a next step 
and a refined modeling of the constitutive equations and the boundary conditions should be considered.  
Both aspects are subject of current research by the authors.

\section*{Acknowledgements}
The authors would like to gratefully acknowledge the support by the German Research Foundation (DFG) via grants IRTG~1529, GSC~233, and TRR~154. Part of the work of the second author was carried out during a research stay at Waseda University, Tokyo. The hospitality and kind support of Waseda university is gratefully acknowledged.


\small

\end{document}